\documentclass[11pt,a4paper]{article}
\usepackage[utf8]{inputenc}
\usepackage[T1]{fontenc}
\usepackage{amsmath,amssymb,amsthm}
\usepackage{mathrsfs}
\usepackage{bm}
\usepackage{graphicx}
\usepackage{geometry}
\usepackage{hyperref}
\hypersetup{
    colorlinks=true,
    linkcolor=blue,
    citecolor=blue,
    urlcolor=blue
}

\newtheorem{theorem}{Theorem}[section]
\newtheorem{lemma}[theorem]{Lemma}
\newtheorem{proposition}[theorem]{Proposition}
\newtheorem{corollary}[theorem]{Corollary}
\newtheorem{definition}[theorem]{Definition}
\newtheorem{remark}[theorem]{Remark}
\newtheorem{assumption}[theorem]{Assumption}

\DeclareMathOperator{\supp}{supp}

\DeclareMathOperator{\intr}{int}

\title{Symmetry and Qualitative \& Quantitative Stability for a Class of Overdetermined Problems in C-GNP Domains with Source Supported in the Core}

\author{M. Barkatou\\
ISTM Laboratory, Department of Mathematics\\
Chouaib Doukkali University, Morocco\\
\texttt{barkatou.m@ucd.ac.ma}}

\date{April 20, 2026}

\begin{document}

\maketitle

\begin{abstract}
We introduce a unified geometric framework for domains satisfying a geometric normal property (C-GNP) relative to a strictly convex set \(C\). Under the fundamental assumption that the source \(f\) is supported within the core \(C\), we establish the stability of superlevel sets for elliptic equations and prove a rigid symmetry result for a classical Serrin-type problem via a method that avoids moving planes, relying instead on geometric monotonicity and the Hopf boundary lemma.

We then extend this analysis to a coupled biharmonic overdetermined problem \(\mathrm{P}(\kappa)\) with source supported in the core. Using the compactness properties of the C-GNP class and the stability of thickness functions under Hausdorff convergence, we prove a qualitative stability theorem: if the overdetermined condition is approximately satisfied in \(L^2\) norm, the domain converges in the Hausdorff sense to the unique ball solution.

Furthermore, we establish a quantitative stability estimate: there exists a constant \(C\) such that
\[
\rho_e - \rho_i \le C \big\| |\nabla u| |\nabla v| - \kappa \big\|_{L^2(\partial \Omega)}^{\tau_N},
\]
with \(\tau_2 = 1\), \(\tau_3\) arbitrarily close to 1, and \(\tau_N = 2/(N-1)\) for \(N \ge 4\) in the general case. For convex domains, we improve the exponent to \(\tau_N = 4/(N+1)\) via a weighted Reilly identity. The proof relies on Reilly-type integral identities adapted to the coupled system and Hardy-Poincaré inequalities tailored to the geometry.

\noindent\textbf{Keywords:} Overdetermined problems, Symmetry, Stability, C-GNP class, Thickness function, Biharmonic system, Hausdorff convergence.
\noindent\textbf{MSC 2020:} 35J25, 35J30, 49Q10, 35B06.
\end{abstract}

\tableofcontents

\section{Introduction}
\label{sec:intro}

Overdetermined boundary value problems are central in geometric analysis, dating back to Serrin's seminal work \cite{Ser71}, which characterized balls via constant Dirichlet and Neumann data using the method of moving planes. While powerful, this method is not always well-suited for quantitative stability analysis. We propose an alternative approach based on a radial parametrization from a strictly convex core \(C\), developed in \cite{Bar02, Bar26}.

This geometric framework, known as the \(\mathcal{O}_C\) class or C-GNP (Geometric Normal Property), turns out to be remarkably well-adapted to studying the stability of overdetermined problems. We apply it here to the coupled problem \(\mathrm{P}(\kappa)\):
\begin{equation}
\label{eq:Pkappa}
\left\{
\begin{array}{ll}
-\Delta u = f, \quad -\Delta v = u & \text{in } \Omega, \\
u = v = 0 & \text{on } \partial\Omega, \\
|\nabla u| |\nabla v| = \kappa & \text{on } \partial\Omega.
\end{array}
\right.
\end{equation}

\begin{assumption}[Fundamental Hypothesis]
\label{ass:supp}
We assume that \(f \ge 0\) is constant (taken as \(f \equiv 1\) for simplicity) and that its support is contained in \(C\). This hypothesis guarantees that \(u\) is harmonic in the annulus \(\Omega \setminus C\), a crucial property for the radial monotonicity and the stability of level sets. The extension to the case where \(f\) is not supported in \(C\) is an open problem discussed in Section~\ref{sec:open}.
\end{assumption}

This geometric viewpoint also connects naturally to the class \(\mathcal{O}_C\) and the associated return map introduced in \cite{BarElM26}. While the present paper is self-contained within elliptic PDE theory, the dynamical perspective provides additional insight into the rigidity of the symmetry result.

\section{The \(\mathcal{O}_C\) Class and Geometric Properties}
\label{sec:OCclass}

\begin{definition}[Class \(\mathcal{O}_C\) \cite{Bar02}]
\label{def:OC}
Let \(C \subset \mathbb{R}^N\) be a strictly convex set with \(C^2\) boundary. Denote by \(\nu(c)\) the outward unit normal to \(\partial C\) at \(c \in \partial C\). The class \(\mathcal{O}_C\) consists of open sets \(\Omega \subset \mathbb{R}^N\) with Lipschitz boundary outside \(C\) and such that:
\begin{enumerate}
    \item \(\intr(C) \subset \Omega\),
    \item \(\partial\Omega\) is Lipschitz outside \(C\),
    \item For each \(c \in \partial C\), the outward normal ray \(\Delta_c = \{c + r\nu(c) : r \ge 0\}\) intersects \(\Omega\) in a connected interval,
    \item For almost every \(x \in \partial\Omega\) where the inward unit normal \(n(x)\) exists, the half-line \(\{x + t n(x) : t \ge 0\}\) intersects \(C\).
\end{enumerate}
\end{definition}

Condition (iii) ensures that the reciprocal map \(\pi: \partial\Omega \to \partial C\) following inward normals is well-defined almost everywhere. Together with (ii), this guarantees that the region \(\Omega \setminus C\) is diffeomorphic to the product \(\partial C \times (0,1)\).

\begin{remark}
The strict convexity of \(C\) ensures that the normal exponential map \(\partial C \times (0,\infty) \to \mathbb{R}^N \setminus C\) given by \((c,r) \mapsto c + r\nu(c)\) is a global diffeomorphism onto its image. If \(C\) were merely convex (admitting flat boundary portions), the normal rays would intersect, and the parametrization would not be globally injective.
\end{remark}

\begin{proposition}[Radial parametrization and regularity]
\label{prop:param}
For every \(\Omega \in \mathcal{O}_C\), there exists a unique thickness function \(d: \partial C \to (0,\infty)\) such that
\[
\Omega \setminus C = \{ c + r\nu(c) : c \in \partial C, \; 0 < r < d(c) \}.
\]
The boundary \(\partial\Omega\) is parametrized by the radial map \(\Phi(c) = c + d(c)\nu(c)\). Moreover:
\begin{enumerate}
    \item[(i)] The function \(d\) is continuous on \(\partial C\).
    \item[(ii)] Since \(\partial\Omega = \{u = 0\}\) and \(|\nabla u| > 0\) on \(\partial\Omega\) by the Hopf lemma, the implicit function theorem guarantees that \(d \in C^1(\partial C)\).
\end{enumerate}
\end{proposition}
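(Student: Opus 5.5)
The plan is to build \(d\) directly from the normal rays and then read off its regularity from the properties of \(\mathcal{O}_C\) and of \(u\).

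\textbf{Existence and uniqueness.} For each \(c\in\partial C\) set
\[
d(c)=\sup\{r\ge 0:\ c+s\nu(c)\in\Omega \text{ for all } s\in[0,r)\}.
\]
\begin{enumerate}
\item We have \(d(c)>0\). This holds because \(c\) lies in the closure of \(\intr(C)\subset\Omega\), and \(\partial\Omega\) stays away from \(\partial C\).
\item We have \(d(c)<\infty\) because \(\Omega\) is bounded.
\item By condition (iii) of Definition~\ref{def:OC}, \(\Delta_c\cap\Omega\) is a single interval. Hence \(\Delta_c\cap(\Omega\setminus C)=\{c+r\nu(c):0<r<d(c)\}\).
\end{enumerate}
By the Remark after Definition~\ref{def:OC}, the map \((c,r)\mapsto c+r\nu(c)\) is a bijection from \(\partial C\times(0,\infty)\) onto \(\mathbb{R}^N\setminus C\). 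Taking the union over \(c\) gives the claimed description of \(\Omega\setminus C\). The same injectivity forces uniqueness: any other \(\tilde d\) with the same property must satisfy \(\tilde d(c)=\sup\{r: c+r\nu(c)\in\Omega\}=d(c)\). It then follows that \(\Phi(c)=c+d(c)\nu(c)\) lies in \(\partial\Omega\), and that every point of \(\partial\Omega\setminus C\) is of this form.

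\textbf{Continuity (i).} I argue by sequences. Suppose \(c_k\to c\) and \(d(c_k)\to\ell\) along a subsequence; such limits exist by boundedness.
\begin{itemize}
\item Since \(\Phi(c_k)\in\partial\Omega\) and \(\partial\Omega\) is closed, the limit \(c+\ell\nu(c)\) lies in \(\partial\Omega\). Because \(\Delta_c\) meets \(\partial\Omega\) outside \(C\) only at \(r=d(c)\), and openness of \(\Omega\) prevents an interior tangency along a connected interval, we get \(\ell\ge d(c)\).
\item Conversely, if \(\ell>d(c)\), then points \(c+r\nu(c)\) with \(d(c)<r<\ell\) would be limits of points of \(\Omega\). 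Lipschitz regularity of \(\partial\Omega\) near \(\Phi(c)\) rules this out: locally \(\Omega\) lies on one side of a Lipschitz graph. Condition (iv) of Definition~\ref{def:OC} ensures the ray \(\Delta_c\) is transversal to that graph, so it genuinely exits.
\end{itemize}
Hence \(\ell=d(c)\), and \(d\) is continuous.

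\textbf{\(C^1\) regularity (ii).} Here I use \(u\), the solution of \(-\Delta u=f\) in \(\Omega\), \(u=0\) on \(\partial\Omega\), assuming \(\partial\Omega\) is regular enough for Hopf's lemma and \(u\in C^1\) up to the boundary. Define
\[
F(c,r)=u(c+r\nu(c)),
\]
which is \(C^1\) in \((c,r)\) since \(\nu\) is \(C^1\). On the boundary, \(F(c,d(c))=0\) and
\[
\partial_rF(c,d(c))=\nabla u(\Phi(c))\cdot\nu(c)=-|\nabla u|\,(n\cdot\nu(c)),
\]
where \(n\) is the inward normal. Hopf's lemma gives \(|\nabla u|>0\) on \(\partial\Omega\). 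Condition (iv) gives \(n\cdot\nu(c)>0\), because the inward normal line from \(\Phi(c)\) hits \(C\). The implicit function theorem then gives a local \(C^1\) solution \(r=\tilde d(c)\). By uniqueness and continuity of \(d\), \(\tilde d=d\), so \(d\in C^1(\partial C)\).

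\textbf{Main obstacle.} The hardest step is the transversality \(n\cdot\nu(c)\neq0\) at every point. Condition (iv) only holds almost everywhere and only says that the inward normal ray meets \(C\); it does not directly give that \(\nu(c)\) points outward relative to \(\partial\Omega\). I would first try a strict-convexity/star-shapedness argument: any \(x\in\partial\Omega\) is visible along \(\Delta_{\pi(x)}\), which forces the angle condition. I would then extend from almost every point to every point by continuity of \(\nabla u\) up to the boundary.
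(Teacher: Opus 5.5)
Your skeleton matches the paper's. You read $d$ off condition (iii) of Definition~\ref{def:OC}, and you get (ii) from the Hopf lemma plus the implicit function theorem; that is all the paper offers, and it simply asserts (i). You are right that (ii) needs more than Lipschitz regularity. For example, a regular hexagon of inradius $3/2$ around the closed unit disk lies in $\mathcal{O}_C$, yet $d$ has kinks at the vertices, where $\nabla u=0$.

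The genuine gap is the transversality you flag as the main obstacle and leave open, although both (i) and (ii) rest on it. The route you sketch (visibility of $x$ along $\Delta_{\pi(x)}$) gives only the non-strict inequality, and a radial wall in $\partial\Omega$ also satisfies that. Strictness comes from (iv) through the supporting hyperplane of $C$ at $c$. Let $x=c+r\nu(c)\in\partial\Omega$ with $r>0$, and suppose the inward normal half-line meets $C$ at $x+t_0n(x)$. Then $t_0\le r+\operatorname{diam}C$, and $C\subset\{z:(z-c)\cdot\nu(c)\le 0\}$ gives $0\ge r+t_0\,n(x)\cdot\nu(c)$, that is,
\[
n(x)\cdot\nu(c)\le-\frac{r}{r+\operatorname{diam}C}<0 .
\]
Since $\nabla u=|\nabla u|\,n$ for the inward normal, this gives $\partial_rF(c,d(c))=|\nabla u|\,n\cdot\nu(c)<0$. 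Your formula and your claimed sign $n\cdot\nu(c)>0$ both flip $n$, and the two errors cancel. For a $C^1$ boundary, (iv) passes to every boundary point by continuity of $n$ (with $\overline{C}$ in place of $C$, which leaves the bound intact). So the implicit-function step closes immediately, with no star-shapedness argument.

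For (i) the gap is more serious. Your upper-semicontinuity step invokes transversality at the single point $\Phi(c)$. There a Lipschitz boundary may have no normal, and (iv) says nothing. Lipschitz regularity alone does not exclude a radial wall $\{c+r\nu(c):d(c)\le r\le\ell\}\subset\partial\Omega$, which is exactly what $\ell>d(c)$ produces via (iii).

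Your first bullet presupposes the conclusion (``$\Delta_c$ meets $\partial\Omega$ only at $r=d(c)$''). So does your earlier claim that every point of $\partial\Omega\setminus C$ equals some $\Phi(c)$. The inequality $\ell\ge d(c)$ follows just from openness: a compact initial segment of $\Delta_c$ lies in $\Omega$ together with a neighbourhood.

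To exclude the wall you must use the bound above almost everywhere. For $N=2$ the wall has positive length, and its normal is a.e.\ orthogonal to $\nu(c)$, contradicting (iv). For $N\ge 3$ the wall is $\mathcal{H}^{N-1}$-null, so a neighbourhood argument is needed. Straighten the rays by the $C^1$ map $(c,r)\mapsto c+r\nu(c)$ and write $\Omega$ locally as a Lipschitz subgraph. Integrating the bound along rays (Fubini) then shows that the local defining function increases at a uniform rate along every nearby ray. Hence each such ray crosses $\partial\Omega$ exactly once, and $d$ is even Lipschitz.

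Finally, your proof of $d(c)>0$ uses that $\partial\Omega$ stays away from $\partial C$, i.e.\ $\overline{C}\subset\Omega$. Definition~\ref{def:OC} does not provide this; it only gives $\intr(C)\subset\Omega$. This is a tacit extra hypothesis, in the statement as well as in your argument.
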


\begin{theorem}[Compactness of the \(\mathcal{O}_C\) class \cite{Bar02}]
\label{thm:compact}
The class \(\mathcal{O}_C\) is compact under Hausdorff convergence. Moreover, Hausdorff, compact, and \(L^1\) convergences are equivalent in this class.
\end{theorem}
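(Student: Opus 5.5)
The plan is to reduce everything to the thickness function of Proposition~\ref{prop:param}. Each \(\Omega \in \mathcal{O}_C\) is determined by \(d_\Omega : \partial C \to (0,\infty)\), and the convergence of sets should become a convergence of functions on the fixed compact manifold \(\partial C\). Since Hausdorff convergence is only meaningful for uniformly bounded families, I assume (as is implicit in \cite{Bar02}) that all sets lie in a fixed bounded region \(D \supset C\). Then \(d_\Omega \le M := \sup_{c}\operatorname{dist}\) along the normal ray to \(\partial D\), and \(M\) is finite because the normal exponential map is a diffeomorphism (strict convexity of \(C\)).

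\textbf{Step 1 (compactness).} Take a sequence \(\Omega_n \in \mathcal{O}_C\). By Blaschke-type compactness of closed subsets of \(\overline D\) in the Hausdorff metric (applied to complements \(\overline D\setminus\Omega_n\)), a subsequence converges to some open \(\Omega\). Next I must show \(\Omega \in \mathcal{O}_C\). Condition (i) and the star-shapedness along normals (iii) pass to the limit directly. Indeed, the complement of \(\Omega_n\) on each ray \(\Delta_c\) is a half-line \([d_n(c),\infty)\), and Hausdorff limits of such half-line families are again of this form, with \(d(c) = \lim d_n(c)\) along the subsequence. The key point is an \emph{equi-Lipschitz (or uniform cone) bound} on \(d_n\), which I would extract from condition (iv). At a boundary point \(\Phi_n(c)\), the inward normal must hit \(C\), so the normal cannot be too tilted relative to \(\nu(c)\). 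Quantitatively, the angle between \(n(x)\) and \(-\nu(c)\) is bounded by some \(\theta_0<\pi/2\) depending only on \(\operatorname{diam} C\), \(M\), and the curvature of \(\partial C\). This yields \(|\nabla_{\partial C} d_n| \le L\) a.e. With such a bound, Arzelà--Ascoli gives \(d_n \to d\) uniformly, \(d\) is Lipschitz, \(\partial\Omega=\Phi(\partial C)\) is Lipschitz, and (iv) is preserved by an approximation argument at points where the normal exists, using weak-* convergence of \(\nabla d_n\). Positivity \(d>0\) needs a uniform lower bound: I would impose it (or deduce it) from \(\intr C\subset\Omega_n\) together with \(\Omega_n\setminus C\) nonempty along each ray, treating \(d\ge0\) as allowed in the limit if necessary.

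\textbf{Step 2 (equivalence of convergences).} With the parametrization, Hausdorff distance between \(\Omega_1,\Omega_2\) is comparable to \(\|d_1-d_2\|_\infty\), since \(\Phi\) is bi-Lipschitz on \(\partial C\times[0,M]\). The \(L^1\) distance equals \(\int_{\partial C}\int_{d_2}^{d_1} J(c,r)\,dr\,d\sigma\) with Jacobian \(J\) bounded above and below, hence comparable to \(\|d_1-d_2\|_{L^1(\partial C)}\). Under the uniform Lipschitz bound \(L\), the \(L^1\) and \(L^\infty\) norms of \(d_1-d_2\) control each other via \(\|g\|_\infty \lesssim \|g\|_1^{1/N}\) for \(L\)-Lipschitz \(g\) on an \((N-1)\)-manifold. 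Compact (complementary Hausdorff) convergence is handled identically. Thus all three convergences coincide.

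The main obstacle is Step 1's uniform Lipschitz estimate from condition (iv). Condition (iv) is only an a.e. statement about normals, so the argument must turn it into a cone condition: for each boundary point, a cone with vertex there and opening \(\pi-2\theta_0\) around \(\nu(c)\) lies outside \(\Omega\). I would try to prove this first by contradiction, using that \(\partial\Omega\) is a Lipschitz graph over \(\partial C\) and integrating the normal constraint along curves on \(\partial C\).
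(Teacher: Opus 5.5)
The paper gives no proof of this statement; it is imported from \cite{Bar02}. Judged on its own, your reduction to thickness functions is a sensible plan, but it breaks at exactly the step you flagged. The uniform angle bound, and with it the equi-Lipschitz bound on $d_n$, is false on $\mathcal{O}_C$.

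Condition (iv) only places $-n(x)$ in the cone of directions from $x$ towards $C$. When $x=\Phi(c)$ is close to $\partial C$, this cone is almost a half-space. If the principal curvatures of $\partial C$ are $\ge k>0$, then $C\subset B_{1/k}(c-\nu(c)/k)$. The best bound is then $n(x)\cdot\nu(c)\ge\sqrt{1-(1+k\,d(c))^{-2}}\approx\sqrt{2k\,d(c)}$, which degenerates as $d(c)\to0$.

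This degeneration is attained inside a fixed ball. Take $C=\overline{B_1(0)}\subset\mathbb{R}^2$, $x_\delta=(0,1+\delta)$, $p_\delta=(-\sqrt{2\delta-\delta^2},\,1-\delta)\in\partial C$, and $\Omega_\delta=B_{1+\delta/2}(0)\cup B_{|x_\delta-p_\delta|}(p_\delta)$. Every inward normal passes through $0$ or $p_\delta$, and every normal ray of $C$ meets $\Omega_\delta$ in an initial segment, so $\Omega_\delta\in\mathcal{O}_C$ with $d_\delta\ge\delta/2>0$. Yet $d_\delta(e_2)=\delta$ and $n(x_\delta)\cdot e_2\approx\sqrt{2\delta}$, so $|\nabla_{\partial C}d_\delta(e_2)|\approx(2\delta)^{-1/2}\to\infty$.

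You also cannot deduce a uniform lower bound on $d$: $B_{1+1/n}(0)$ lies in the class with $d_n\equiv1/n$. Imposing such a bound only proves compactness of a strictly smaller class. Consequently everything that uses $L$ is unsupported as written:
\begin{itemize}
\item Arzel\`a--Ascoli;
\item Lipschitz regularity of the limit;
\item the weak-$*$ argument for (iv);
\item in Step 2, the bound $\|d_1-d_2\|_\infty\lesssim d_H$, which needs a Lipschitz/cone condition on the $d_i$, not just bi-Lipschitz $\Phi$;
\item in Step 2, the bound $\|g\|_\infty\lesssim\|g\|_1^{1/N}$.
\end{itemize}

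The missing idea is to work with the degenerate estimate instead of a uniform one. The same cone argument gives $|\nabla_{\partial C}d|\le C(1+d^{-1/2})$ a.e.\ on $\{d>0\}$. For merely strictly convex $C$ you get a bound $\Psi(d)$ with $\Psi$ finite on $(0,M]$. Hence $d^{3/2}$ (respectively $G(d)$ with $G'=1/\Psi$) is equi-Lipschitz, and the $d_n$ share a common modulus of continuity, H\"older of exponent $2/3$ in the curved case. That suffices for Arzel\`a--Ascoli with a limit $d\ge0$. Only after that should you identify the Hausdorff limit of $\overline D\setminus\Omega_n$ with $\{\Phi(c,r):r\ge d(c)\}$, since restriction to a fixed ray is not Hausdorff-continuous.

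Step 2 then survives with power-type rather than linear comparisons, e.g.\ $\|d_1-d_2\|_\infty\lesssim d_H^{2/3}$ and $\|g\|_\infty\lesssim\|g\|_1^{2/(3N-1)}$. This is still enough for the equivalence of the three convergences.

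Lipschitz regularity of $\partial\Omega\setminus C$ and condition (iv) must be proved locally on $\{d\ge\delta\}$. There $d_n\ge\delta/2$ eventually and the gradients are locally bounded, so your weak-$*$ argument does work, but you should say why. The outward normal is proportional to $\nu-(I+dW)^{-1}\nabla_{\partial C}d$, with $W$ the Weingarten map of $\partial C$. So ``$-n\in$ cone towards $C$'' is a closed convex constraint on $\nabla_{\partial C}d(c)$ depending continuously on $(c,d(c))$. It therefore passes to weak-$*$ limits by Mazur's lemma.
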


\section{Elliptic Framework and Radial Monotonicity}
\label{sec:monotonicity}

Consider the Dirichlet problem
\[
-\Delta u = f \quad \text{in } \Omega, \qquad u = 0 \quad \text{on } \partial\Omega,
\]
where \(f \in L^\infty(\Omega)\), \(f \ge 0\), and \(\supp f \subset C\).

\begin{lemma}[Regularity]
\label{lem:regularity}
\(u \in C_{\mathrm{loc}}^{1,\alpha}(\Omega) \cap C(\overline{\Omega})\).
\end{lemma}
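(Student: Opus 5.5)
We treat the two halves of the statement separately. Interior $C^{1,\alpha}_{\mathrm{loc}}$ regularity comes from $L^p$ elliptic theory. Continuity up to $\partial\Omega$ comes from boundary barriers built from the Lipschitz and radial structure of $\Omega\in\mathcal{O}_C$.

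\emph{Existence and interior regularity.} Since $f\in L^\infty(\Omega)$ and $\Omega$ is bounded (it sits inside a bounded neighbourhood of $C$ by Proposition~\ref{prop:param}), Lax--Milgram gives a unique weak solution $u\in H^1_0(\Omega)$. By the weak maximum principle, $0\le u\le \|f\|_\infty\,\mathrm{diam}(\Omega)^2/(2N)$; to see this, compare with the quadratic $\|f\|_\infty(R^2-|x-x_0|^2)/(2N)$ on a ball $B_R(x_0)\supset\Omega$. So $u\in L^\infty$.

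Fix $\Omega'\Subset\Omega''\Subset\Omega$. Calder\'on--Zygmund estimates give $u\in W^{2,p}(\Omega'')$ for every $p<\infty$, with
\[
\|u\|_{W^{2,p}(\Omega')}\le C\big(\|f\|_{L^p(\Omega'')}+\|u\|_{L^p(\Omega'')}\big).
\]
Take $p>N$. Morrey's embedding then yields $u\in C^{1,\alpha}(\overline{\Omega'})$ with $\alpha=1-N/p$, so any $\alpha\in(0,1)$ is allowed. On the annulus $\Omega\setminus C$, where $f=0$, $u$ is in fact harmonic and hence real-analytic. This sharper fact is not needed here, but it will be used later.

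\emph{Continuity up to the boundary.} Every boundary point $x_0\in\partial\Omega$ lies outside $\operatorname{int} C$, by condition (i) of Definition~\ref{def:OC} and since $d>0$. Near such a point $\partial\Omega$ is Lipschitz, by conditions (ii) and (iii) of Definition~\ref{def:OC} together with the continuous graph representation $r<d(c)$ from Proposition~\ref{prop:param}. A Lipschitz domain satisfies a uniform exterior cone condition, so every boundary point is Wiener-regular. Perron's method, or equivalently the standard local barrier for cone-regular points, then shows $u(x)\to0$ as $x\to x_0$. Concretely, write $w=u-v$, where $v$ solves $-\Delta v=f$ on a large ball with zero boundary data. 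Then $w$ is harmonic in $\Omega$ with continuous boundary data $-v|_{\partial\Omega}$, and the regularity of the boundary gives $w\in C(\overline\Omega)$; hence $u=w+v\in C(\overline\Omega)$.

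\emph{Main obstacle.} The delicate point is justifying the exterior cone condition uniformly on all of $\partial\Omega$. The definition only asserts Lipschitz regularity ``outside $C$'', and the normal-ray structure must exclude pathological inward cusps. I would first show that $d$ is Lipschitz, not merely continuous, by combining condition (iv) with the strict convexity of $C$. Once $d$ is Lipschitz, the map $\Phi(c)=c+d(c)\nu(c)$ is bi-Lipschitz, which delivers the cone condition. If that argument fails, a weaker condition suffices: $\mathbb{R}^N\setminus\Omega$ contains, at each boundary point, the outer portion $\{c+r\nu(c): r\ge d(c)\}$ of a normal ray. This gives a segment rather than a cone, which is enough for regularity when $N=2$; for general $N$, Wiener's criterion needs positive capacity density, which the Lipschitz hypothesis supplies.
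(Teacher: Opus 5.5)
The paper states this lemma without any proof, so there is no argument to compare yours against. Your proof is the standard one and it is correct: interior $W^{2,p}$ estimates plus Morrey give $C^{1,\alpha}_{\mathrm{loc}}$, and regularity of the boundary points gives continuity up to $\partial\Omega$.

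Three remarks would tighten it.

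First, the ``main obstacle'' is not real. For $x=c+d(c)\nu(c)$, the nearest point of the convex set $C$ is $c$. Hence $\operatorname{dist}(\partial\Omega,C)=\min_{\partial C} d>0$, using continuity of $d$ and compactness of $\partial C$. So all of $\partial\Omega$ lies where condition (ii) of Definition~\ref{def:OC} applies. Compactness of $\partial\Omega$ then makes the Lipschitz charts, and hence the exterior cones, uniform. Moreover, mere continuity up to the boundary only needs each boundary point to be regular, and a pointwise exterior cone already gives that. So the detour through proving $d$ Lipschitz from (iv) is unnecessary.

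Second, drop the segment fallback. As you note yourself, it works only for $N=2$. For $N\ge 3$ a segment has zero capacity, so it gives no regularity at all.

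Third, in the decomposition $u=w+v$ you tacitly identify the variational ($H^1$) harmonic function $w$ with the Perron solution for the data $-v|_{\partial\Omega}$. This is standard, but it should be cited. You can also bypass it: apply the boundary H\"older estimate for weak solutions under an exterior cone condition (Gilbarg--Trudinger, Chapter 8) directly to $u$ with zero boundary data. That gives $u\in C^{\beta}(\overline{\Omega})$ in one step. Also rename your auxiliary function, since $v$ already denotes the second component of the solution of \eqref{eq:Pkappa}.
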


\begin{lemma}[Positivity]
\label{lem:positivity}
\(u > 0\) in \(\Omega\).
\end{lemma}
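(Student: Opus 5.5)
The plan is to combine the weak maximum principle with the strong maximum principle, using the geometry of the class \(\mathcal{O}_C\) to supply the connectedness that the strong principle needs. Throughout, \(u\) is the solution of the Dirichlet problem above, with the regularity of Lemma~\ref{lem:regularity}.

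\textbf{Step 1: \(u \ge 0\).} Since \(f \ge 0\), \(u\) is weakly superharmonic in \(\Omega\), and it vanishes on \(\partial\Omega\). I would test the weak formulation against \(u^- = \max(-u,0) \in H^1_0(\Omega)\). This is admissible because \(u \in C(\overline{\Omega})\) with \(u=0\) on \(\partial\Omega\), and \(\partial\Omega\) is Lipschitz outside \(C\) while \(\intr(C) \subset \Omega\), so \(\partial\Omega\) is Lipschitz everywhere relevant. The test gives
\[
-\int_\Omega |\nabla u^-|^2 = \int_\Omega f\, u^- \ge 0 .
\]
Hence \(\nabla u^- = 0\), so \(u^-\) is constant on each component. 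It is therefore zero, because it vanishes on the boundary.

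\textbf{Step 2: \(\Omega\) is connected.} By Proposition~\ref{prop:param},
\[
\Omega = \intr(C) \cup \{c + r\nu(c) : c\in\partial C,\ 0<r<d(c)\}
\]
up to the points of \(\partial C\), which lie in \(\Omega\) since \(d>0\). Each point \(c + r\nu(c)\) is joined to \(c\) by a segment of its normal ray inside \(\Omega\), using the connectedness in condition (iii). The point \(c\) is in turn joined to any point of the convex, hence connected, set \(\intr(C)\). So \(\Omega\) is path-connected.

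\textbf{Step 3: strict positivity.} I would apply the strong maximum principle for weak supersolutions, in the form of the weak Harnack inequality (Gilbarg--Trudinger, Thm.~8.19). On the connected open set \(\Omega\), a nonnegative weak supersolution of \(-\Delta u \ge 0\) either vanishes identically or is strictly positive. Alternatively, since \(u\) is continuous, I would use the super-mean-value property: if \(u(x_0)=0\) at an interior point, the mean-value inequality forces \(u\equiv 0\) on balls around \(x_0\). The zero set is then open and closed, so \(u \equiv 0\) by connectedness.

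\textbf{Step 4: excluding \(u \equiv 0\).} If \(u\equiv 0\), then \(f = -\Delta u = 0\) a.e. This contradicts the standing hypothesis: \(f \equiv 1\) on its support in \(C\) (Assumption~\ref{ass:supp}), or more generally \(f \ge 0\) with \(f \not\equiv 0\). Hence \(u>0\) in \(\Omega\).

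\textbf{Main obstacle.} The only delicate point is the low regularity. Because \(f\in L^\infty\) may jump across \(\partial C\), \(u\) is only \(C^{1,\alpha}_{\mathrm{loc}}\) and not classical, so the strong maximum principle must be invoked in its weak (Harnack or mean-value) form rather than the classical \(C^2\) version. A second point is that connectedness of \(\Omega\) is not part of Definition~\ref{def:OC} and has to be derived from the radial parametrization, as in Step 2. Outside \(C\), where \(u\) is harmonic, the classical strong maximum principle applies directly.
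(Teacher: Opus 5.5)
The paper states this lemma without proof, as a standard consequence of the maximum principle, so there is no argument of its own to compare with. Your weak-then-strong maximum principle argument is the natural way to fill it in, and Steps 1, 3 and 4 are correct. Two small remarks:
\begin{itemize}
\item Testing with \(u^-\) is admissible simply because \(u\in H^1_0(\Omega)\) implies \(u^-\in H^1_0(\Omega)\); no boundary regularity is needed.
\item In Gilbarg--Trudinger the weak Harnack inequality is Theorem~8.18. Theorem~8.19 is the strong maximum principle derived from it, and it does require a connected domain.
\end{itemize}
You are right that connectedness is the real issue, since Definition~\ref{def:OC} does not state it.

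The one step that does not follow as written is in Step 2: the claim that the points of \(\partial C\) belong to \(\Omega\) ``since \(d>0\)''. Positivity of \(d\) only gives \(c+r\nu(c)\in\Omega\) for \(0<r<d(c)\). It says nothing about \(c\) itself, and condition (iii) also holds if the ray meets \(\Omega\) exactly in \(\{0<r<d(c)\}\). In fact, the open set \(\intr(C)\cup\{c+r\nu(c): c\in\partial C,\ 0<r<d(c)\}\) is compatible with (i)--(iii) and with Proposition~\ref{prop:param}, but it is disconnected. On its outer shell \(u\) is harmonic (because \(\supp f\subset C\)) with zero boundary values, so \(u\equiv 0\) there and the lemma fails.

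So the inclusion \(C\subset\Omega\), or at least the connectedness of \(\Omega\), has to be taken as part of the framework rather than deduced from \(d>0\). This is clearly the paper's intended reading; for instance, the core is placed strictly inside \(\Omega\) in Lemma~\ref{lem:canonicalcore}. With that hypothesis stated explicitly, your path-connectedness argument and the rest of the proof go through.
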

\begin{lemma}[Uniform angle property]
\label{lem:uniformangle}
Let $C \subset \mathbb{R}^N$ be a strictly convex set with $C^2$ boundary, and let $\Omega \in \mathcal{O}_C$. Then there exists a constant $\theta_0 > 0$ such that for almost every $x \in \partial\Omega$ for which the inward normal ray intersects $\partial C$ at some point $c \in \partial C$, we have
\[
n(x)\cdot \nu(c) \ge \theta_0,
\]
where $n(x)$ is the outward unit normal to $\partial\Omega$ and $\nu(c)$ is the outward unit normal to $\partial C$ at $c$.
\end{lemma}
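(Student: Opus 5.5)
The plan is to obtain the bound from two separate facts: a \emph{pointwise} sign property from the geometry of the normal ray, and \emph{uniformity} from compactness together with the regularity of the thickness function $d$ in Proposition~\ref{prop:param}. Fix $x\in\partial\Omega$ at which $n(x)$ exists, and let $c\in\partial C$ be the first point where the inward ray $\{x-tn(x):t\ge 0\}$ meets $C$. Write $c=x-t_0 n(x)$ with $t_0>0$. Since the ray enters the convex set $C$ at $c$, its direction $-n(x)$ cannot point outward at $c$; hence $-n(x)\cdot\nu(c)\le 0$, that is, $n(x)\cdot\nu(c)\ge 0$. The whole task is to upgrade this weak inequality to a uniform strict one.

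\textbf{Step 1 (pointwise transversality at the radial foot).} I will use the radial description $\partial\Omega=\Phi(\partial C)$, where $\Phi(c')=c'+d(c')\nu(c')$. By Proposition~\ref{prop:param}(ii), $d\in C^1(\partial C)$, and $\partial C$ is compact, so $\|\nabla_{\partial C} d\|_\infty=:L<\infty$. Differentiating $\Phi$ and using that the shape operator of $\partial C$ is bounded (as $\partial C$ is $C^2$) and positive (strict convexity), I get the following for the normal of the graph $\{r=d(c')\}$ in the normal coordinates $(c',r)$. This normal is proportional to $\nu(c')-(I+d\,W)^{-1}\nabla d$, where $W$ is the Weingarten map. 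Consequently
\[
n(\Phi(c'))\cdot\nu(c')\;\ge\;\frac{1}{\sqrt{1+L^2}}=:\theta_1>0 .
\]
This uses that $I+dW\ge I$, since $W\ge 0$.

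\textbf{Step 2 (passing from the radial foot $c'$ to the normal-ray foot $c$).} The two points $c'=\pi_{\mathrm{rad}}(x)$ and $c$ generally differ. I will argue by contradiction. Suppose there are $x_k\in\partial\Omega$ with normal-ray feet $c_k$ satisfying $n(x_k)\cdot\nu(c_k)\to 0$. By compactness of $\partial\Omega$, of $\partial C$ and of $S^{N-1}$, and by continuity of $n$ (which follows from $d\in C^1$), I pass to limits $x_k\to x$, $c_k\to c$, $n(x_k)\to n$. In the limit, $c=x-t n$ with $t\ge\operatorname{dist}(\partial\Omega,C)>0$; this positivity holds because $d$ is continuous and positive on the compact set $\partial C$, hence $\min d>0$. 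We also get $n\cdot\nu(c)=0$. So the line through $x$ in direction $n$ is tangent to $\partial C$ at $c$, and by strict convexity it meets $\overline C$ only at $c$.

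To contradict this, I use the radial structure near $c$. The set $\Omega\setminus C$ is the region between $\partial C$ and the graph $r=d$, and $\partial\Omega$ is a $C^1$ graph over $\partial C$. The inward normal ray from nearby points $x_k$ must hit $C$ by condition (iv). However, a grazing configuration can be perturbed: moving along $\partial\Omega$ near $x$ slightly in the direction of $\nu(c)$, and using Step 1 (which says $n$ has a definite component along $\nu(c')$), forces the inward ray from some nearby boundary point of positive surface measure to miss $C$. This violates (iv), which is required almost everywhere.

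\textbf{Main obstacle.} Step 2 is the delicate part. The grazing contradiction needs a quantitative relation between $c$ and $c'$, for instance the estimate $|c-c'|\lesssim t\,|n-\nu(c')|$ from the fact that normal rays of $\partial C$ do not cross. It also needs an argument that the "bad'' set is open in $\partial\Omega$, so that it carries positive measure. My first attempt would be a direct estimate rather than contradiction. Write $n=\alpha\nu(c')+\beta\tau$ with $\alpha\ge\theta_1$ and $\tau$ tangent to $\partial C$ at $c'$. Then bound $\nu(c)-\nu(c')$ using the Lipschitz constant of the Gauss map (the curvature bound of $\partial C$) times $|c-c'|$. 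Finally use $|c-c'|\le\operatorname{diam}$ together with $\min d>0$ to control the angle defect, and shrink $\theta_0$ accordingly. If this direct estimate fails when $d$ is large compared with the radius of curvature of $C$, I would fall back on the compactness contradiction above.
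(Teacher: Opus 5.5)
\textbf{The gap is the final contradiction in Step 2, and it cannot be repaired.} You claim that a grazing limit configuration can be perturbed into a set of positive measure of boundary points whose inward rays miss $C$. But condition (iv) only asks that the ray \emph{meet} $C$. Tangential contact is compatible with (i)--(iv), even along a whole arc, so there is nothing to contradict.

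Concretely, let $C$ be the closed unit disk in $\mathbb{R}^2$ and let $\partial\Omega$ be a polar graph $r=r(\theta)>1$. The inward normal line at a boundary point lies at distance $r|r'|/\sqrt{r^2+r'^2}$ from the origin, and its closest point lies on the inward half. So (iv) holds iff $|r'|\le r/\sqrt{r^2-1}$, with equality exactly when the ray is tangent to $\partial C$. Equality is the equation of the involute $\gamma(s)=(\cos s+s\sin s,\ \sin s-s\cos s)$, whose normal line at $\gamma(s)$ touches the circle at $(\cos s,\sin s)$.

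Now solve $r'=\phi(\theta)\,r/\sqrt{r^2-1}$ with $\phi$ smooth, $|\phi|\le 1$, $\int_0^{2\pi}\phi\,d\theta=0$, $\phi\equiv 1$ on a small interval, $r(0)=2$ and $\int_0^\theta\phi\ge 0$ (so $r\ge 2$). This yields a $C^\infty$ domain satisfying (i)--(iv), with smooth $d$, so your Step 1 applies. Yet $n(x)\cdot\nu(c)=0$ on an arc of $\partial\Omega$, where $c$ is the unique point at which the inward ray meets $\partial C$.

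If $C$ is meant to be open, let $\phi$ attain $1$ at a single point. Then almost every inward ray meets $C$, but $n(x)\cdot\nu(c)\to 0$ near that point. Hence the lemma as stated does not follow from the hypotheses. Neither your contradiction argument nor your direct estimate can produce $\theta_0$; the error term $\|W\|_\infty|c-c'|$ in the latter is of order one, not small.

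\textbf{What survives.} Your Step 1 is correct: it gives $n(x)\cdot\nu(c')\ge(1+L^2)^{-1/2}$ at the \emph{radial} foot $c'$, where $x=c'+d(c')\nu(c')$. That radial-foot bound is exactly what the paper uses later: in Lemma~\ref{lem:radialmono} and in the symmetry theorem the relevant point satisfies $x=c+d(c)\nu(c)$. It even follows from (iv) alone, without $d\in C^1$. Write $n=\alpha\nu(c')+w$ and take $y=x-tn\in C$. The supporting half-space at $c'$ gives $t\alpha\ge d(c')$, while $t|w|\le|y-c'|\le\operatorname{diam}C$. Hence $n\cdot\nu(c')\ge\bigl(1+(\operatorname{diam}C/\min d)^2\bigr)^{-1/2}$.

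\textbf{The paper's proof has the same defect.} It is also a compactness-by-contradiction argument and breaks at the same place, in three ways. It identifies the normal-ray foot with the radial foot by asserting $x=c+d(c)\nu(c)$, which would force $n(x)=\nu(c)$ and trivialize the lemma. It uses continuity of $n$, which a Lipschitz boundary does not provide. And it declares the limiting tangency incompatible with the C-GNP property, which the example above shows it is not.

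If you want the statement at the normal-ray foot, you must strengthen (iv) quantitatively. For example, require every inward ray to meet a fixed $K\Subset\operatorname{int}C$ with $\operatorname{dist}(K,\partial C)\ge\delta$. Then for $k=c-sn(x)\in K$ with $0<s\le\operatorname{diam}C$, convexity gives $(k-c)\cdot\nu(c)\le-\delta$, that is, $n(x)\cdot\nu(c)\ge\delta/\operatorname{diam}C$.
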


\begin{proof}
By Definition~\ref{def:OC}, for almost every $x \in \partial\Omega$, the inward normal ray
\[
\{x + t n(x) : t \ge 0\}
\]
intersects $C$. Let $c \in \partial C$ be the first intersection point.

Since $C$ is strictly convex with $C^2$ boundary, the outward normal $\nu(c)$ is uniquely defined and depends continuously on $c$. Moreover, the strict convexity implies that the supporting hyperplane at $c$ is unique and separates $C$ from its exterior.

The C-GNP property ensures that $x$ lies on the normal ray issued from $c$, i.e.,
\[
x = c + d(c)\nu(c),
\]
for some $d(c) > 0$. In particular, the vector $x - c$ is aligned with $\nu(c)$.

On the other hand, since $\partial\Omega$ is Lipschitz and locally given as a graph, the normal vector $n(x)$ varies continuously almost everywhere on $\partial\Omega$.

We argue by contradiction. Suppose that no such $\theta_0 > 0$ exists. Then there exists a sequence $(x_k, c_k)$ such that
\[
n(x_k)\cdot \nu(c_k) \to 0.
\]
Up to subsequences, we may assume $x_k \to x_*$ and $c_k \to c_*$ with $x_* = c_* + d(c_*)\nu(c_*)$.

Passing to the limit yields
\[
n(x_*) \cdot \nu(c_*) = 0,
\]
which means that $n(x_*)$ is tangent to $\partial C$ at $c_*$. This contradicts the geometric normal property, since the inward normal direction at $x_*$ must point strictly toward the interior of $C$.

Therefore, there exists $\theta_0 > 0$ such that
\[
n(x)\cdot \nu(c) \ge \theta_0
\]
for almost every such $x$.
\end{proof}

\begin{lemma}[Radial monotonicity]
\label{lem:radialmono}
For each $c \in \partial C$, the function
\[
r \mapsto u(c + r\nu(c))
\]
is strictly decreasing on $(0,d(c))$.
\end{lemma}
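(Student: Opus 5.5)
We prove Lemma~\ref{lem:radialmono} with a moving-parameter comparison built on the C-GNP structure. Everything rests on one fact from Assumption~\ref{ass:supp}: since \(\supp f\subset C\), the function \(u\) is harmonic in the annulus \(A:=\Omega\setminus C\). Lemma~\ref{lem:positivity} gives \(u>0\) in \(\Omega\), and \(u=0\) on \(\partial\Omega\).

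\textbf{Step 1 (reduction to a sign condition).} Fix \(c\in\partial C\) and set \(g(r)=u(c+r\nu(c))\). Then
\[
g'(r)=\nabla u\cdot\nu(c)=:w(x),
\]
evaluated at \(x=c+r\nu(c)\), where we extend \(\nu\) to \(\mathbb{R}^N\setminus C\) by the normal-coordinate field \(\nu(\pi(x))\). This field is well defined and \(C^1\) by the Remark after Definition~\ref{def:OC}. It therefore suffices to show \(w<0\) in \(A\).

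\textbf{Step 2 (comparison via shifted domains, avoiding the equation for \(w\)).} The function \(w\) is not harmonic, because \(\nu\) is not constant. We therefore use a comparison argument instead of a PDE for \(w\). For \(t>0\), define \(\Omega_t=\{c+r\nu(c): 0<r<d(c)-t\}\cup C\), and compare \(u\) with the pulled-back function
\[
u_t(c+r\nu(c))=u\bigl(c+(r+t)\nu(c)\bigr).
\]
By condition (iii) of Definition~\ref{def:OC}, each ray meets \(\Omega\) in an interval, so \(u_t\) is defined on \(\Omega_t\setminus C\). On the outer boundary of \(\Omega_t\) we have \(u_t=0<u\). The claim is \(u_t<u\) on \(\partial C\) as well.

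Granting the claim, we want \(u_t<u\) in \(\Omega_t\setminus C\). The obstruction is that \(u_t\) is not harmonic. Its Laplacian in normal coordinates differs from that of \(u\) only through the mean-curvature term \(H(c,r)\partial_r\), and \(H\) decreases in \(r\) by strict convexity of \(C\). For this reason I would rather work with the infinitesimal version directly. At \(r\to d(c)\), Hopf's lemma together with Lemma~\ref{lem:uniformangle} gives
\[
w=-|\nabla u|\,n\cdot\nu\le-\theta_0|\nabla u|<0,
\]
so \(w<0\) in a neighbourhood of \(\partial\Omega\).

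\textbf{Step 3 (propagation inward).} Let \(r^*(c)=\inf\{s: w<0 \text{ on } (s,d(c))\}\), and suppose \(r^*>0\) for some ray. At such a point \(w=0\), i.e.\ \(g\) has a critical point. Then \(g\) would have an interior local maximum on the ray, and we aim to contradict this with the maximum principle for the harmonic \(u\) on the region
\[
A_s=\{c+r\nu(c): r>s\},
\]
where \(s\) is chosen as the largest radius at which \(u\) attains \(\max_{\partial A_s\cap\partial C_s} u\). Here \(C_s=\{\operatorname{dist}(\cdot,C)\le s\}\) is convex. Concretely, I would prove that
\[
m(s)=\max_{\{\operatorname{dist}(x,C)=s\}} u
\]
is strictly decreasing. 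This follows from the maximum principle on \(\Omega\setminus C_s\), since \(u\) is harmonic there and vanishes on \(\partial\Omega\). Combined with the superharmonicity of \(u\) inside, this gives the level-set nesting that forces the monotonicity.

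\textbf{Main obstacle.} Passing from the monotonicity of \(m(s)\), a statement about maxima on parallel surfaces, to pointwise monotonicity along every ray is the hardest step. In general it requires a reflection- or Hopf-type argument on \(w\) exploiting convexity of \(C_s\). I would first try to show that \(\nabla u\cdot\nu\) satisfies an elliptic inequality of the form
\[
\Delta w+b\cdot\nabla w\ge 0
\]
in the region where \(w\ge0\), using that \(D\nu\) is positive semidefinite (convexity). Then the maximum principle together with \(w<0\) near \(\partial\Omega\) and \(w\le0\) on \(\partial C\) would close the argument.

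Establishing \(w\le 0\) on \(\partial C\) itself is the delicate remaining point. For this I would use the fact that \(u\) restricted to \(C\) solves \(-\Delta u=f\ge0\) and is controlled by the exterior values.
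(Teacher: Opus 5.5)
Your proposal has a genuine gap. Each route you open stops exactly at the step that carries the content of the lemma.

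(a) In Step~2, the boundary claim $u_t<u$ on $\partial C$ means $u(c+t\nu(c))<u(c)$ for all $c$ and $t$. That is the comparison of $r=t$ with $r=0$ on every ray, essentially the statement itself. Even if granted, $u_t$ is not harmonic, so nothing follows in $\Omega_t\setminus C$. What survives is only $w<0$ near $\partial\Omega$.

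(b) In Step~3, $w(r^*)=0$ does not make $r^*$ a local maximum of $g$, since the zero may be touched from below. Even a genuine local maximum of $u$ restricted to one ray is not excluded by the maximum principle, which only forbids interior maxima on open sets. The strict decrease of $m(s)=\max_{\{\operatorname{dist}(\cdot,C)=s\}}u$ is correct. However, the maximizers may lie on different rays for different $s$, so it says nothing along a fixed ray.

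(c) The inequality $\Delta w+b\cdot\nabla w\ge0$ does not follow from $DV\ge0$. For harmonic $u$ and $V=\nu\circ\pi=\nabla\operatorname{dist}(\cdot,C)$ one has $\Delta(\nabla u\cdot V)=2\,D^2u:DV+\nabla u\cdot\Delta V$. This contains the tangential gradient of $u$ times the tangential gradient of the mean curvature of the parallel surfaces. For $N\ge3$ it also contains the tangential Hessian of $u$ weighted by unequal principal curvatures. These terms are unsigned and not controlled by $w$ and $\nabla w$. In the model case $C=B_\rho(0)$ you can bypass this, since $|x|\,w=x\cdot\nabla u$ is harmonic in $\Omega\setminus C$ and negative on $\partial\Omega$. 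But then everything reduces to (d).

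(d) The sign of $w$ on $\partial C$ is left open. It is the $r\to0^+$ end of the claim, and superharmonicity of $u$ in $C$ alone does not determine it.

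The paper takes the other horn of your dilemma. It differentiates in the fixed direction $\nu(c)$, which is all that matters on the ray since $\nu\circ\pi\equiv\nu(c)$ there. So $w=\partial_{\nu(c)}u$ is harmonic in $\Omega\setminus C$, and your Step~2 obstacle disappears. The cost is the boundary sign. Hopf and the normal property give $w<0$ only near the exit point $c+d(c)\nu(c)$. Take instead the exit point $x'$ of the ray from the point $c'\in\partial C$ with $\nu(c')=-\nu(c)$. There the last condition of Definition~\ref{def:OC} forces the outward normal to satisfy $n(x')\cdot\nu(c')>0$, hence $w(x')=|\nabla u(x')|\,n(x')\cdot\nu(c')>0$. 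Again nothing is known on $\partial C$. So the paper's ``$w<0$ on $\partial\Omega$, then maximum principle on $\Omega\setminus C$'' does not supply your missing ingredient either.

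The natural localization is the cap $\Sigma=\Omega\cap\{(x-c)\cdot\nu(c)>0\}$. There $w$ is harmonic. Each inward normal ray must reach $C\subset\{(y-c)\cdot\nu(c)\le0\}$, so $n\cdot\nu(c)>0$ and hence $w<0$ on $\partial\Omega\cap\partial\Sigma$. The whole difficulty is then $w\le0$ on the flat part $\{(x-c)\cdot\nu(c)=0\}\cap\Omega$.

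A reflection/moving-plane comparison using $\supp f\subset C$ would give this, but it needs the reflected caps $\Omega\cap\{(x-c)\cdot\nu(c)>s\}$, $s>0$, to stay in $\Omega$. Definition~\ref{def:OC} does not guarantee that. In the plane, take the domain whose polar radius decreases linearly from $4$ at $\theta=0$ to $3/2$ at $\theta=\pm\pi$, suitably smoothed. It belongs to $\mathcal{O}_{\overline{B_1}}$, yet reflecting its tip across $\{x_1=\lambda\}$ with $\lambda>1$ close to $1$ leaves $\Omega$. Some additional idea or hypothesis is needed here.
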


\begin{proof}
Fix $c \in \partial C$ and consider the ray
\[
\gamma(r) = c + r\nu(c), \quad r \in (0,d(c)).
\]

Since $\supp f \subset C$, the function $u$ is harmonic in $\Omega \setminus C$. In particular, $u$ is harmonic in a neighborhood of the ray $\gamma$.

Define the directional derivative
\[
w(x) = \nabla u(x)\cdot \nu(c).
\]
Then $w$ is harmonic in $\Omega \setminus C$.

\medskip

\noindent
\textbf{Step 1: Boundary behavior.}

Let $x \in \partial\Omega$ be a point lying on the ray issued from $c$. Since $u=0$ on $\partial\Omega$, we have $\nabla u(x) = u_\nu(x)\, n(x)$, the Hopf boundary lemma yields
\[
\nabla u(x)\cdot n(x) < 0,
\]
where $n(x)$ is the outward unit normal to $\partial\Omega$.

By the geometric normal property (C-GNP), there exists a constant $\theta_0 > 0$ such that
\[
n(x)\cdot \nu(c) \ge \theta_0 > 0
\]
for almost every such $x$. Therefore,
\[
\nabla u(x)\cdot \nu(c)
= (\nabla u(x)\cdot n(x))(n(x)\cdot \nu(c))
+ \nabla u(x)\cdot \tau(x),
\]
where $\tau(x)$ is tangent to $\partial\Omega$.

Since $u=0$ on $\partial\Omega$, its tangential derivatives vanish, hence $\nabla u(x)$ is normal to the boundary and
\[
\nabla u(x)\cdot \nu(c) = (\nabla u(x)\cdot n(x))(n(x)\cdot \nu(c)) < 0.
\]

Thus,
\[
w(x) < 0 \quad \text{on } \partial\Omega.
\]

\medskip

\noindent
\textbf{Step 2: Maximum principle.}

Since $w$ is harmonic in $\Omega \setminus C$ and negative on $\partial\Omega$, the maximum principle implies
\[
w(x) < 0 \quad \text{in } \Omega \setminus C.
\]

\medskip

\noindent
\textbf{Step 3: Monotonicity along the ray.}

Along the ray $\gamma$, we have
\[
\frac{d}{dr} u(c + r\nu(c)) = \nabla u(c + r\nu(c))\cdot \nu(c) = w(c + r\nu(c)) < 0.
\]

Therefore, the function $r \mapsto u(c + r\nu(c))$ is strictly decreasing on $(0,d(c))$.
\end{proof}

\begin{theorem}[Symmetry]\label{thm:symmetry-scalar}
Assume that \(|\nabla u|\) is constant on \(\partial \Omega\). Then the thickness function \(d: \partial C \to (0, \infty)\) is constant. Consequently, \(\Omega\) is a parallel body of \(C\): \(\Omega = \{x \in \mathbb{R}^N : \operatorname{dist}(x, C) < d_0\}\). In particular, if \(C\) is a ball, \(\Omega\) is a concentric ball.
\end{theorem}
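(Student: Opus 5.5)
The plan is a touching argument comparing $\Omega$ with the two extremal parallel bodies of $C$, closed by the Hopf boundary lemma. Write $|\nabla u| \equiv \lambda$ on $\partial\Omega$. By Proposition~\ref{prop:param}, $d$ is continuous (indeed $C^1$) on the compact manifold $\partial C$, so it attains its extrema $d_m = \min d$ at $c_m$ and $d_M = \max d$ at $c_M$. Set $\Omega_t = \{x : \operatorname{dist}(x,C) < t\}$. Strict convexity of $C$ gives $\Omega_t = \{c + r\nu(c) : 0 \le r < t\} \cup C$, so $\Omega_{d_m} \subset \Omega \subset \Omega_{d_M}$. The two inclusions are tangent at $x_m = c_m + d_m\nu(c_m)$ and $x_M = c_M + d_M\nu(c_M)$ respectively. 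At both points the outward normal to $\partial\Omega$ equals $\nu(c)$, because $d$ is critical there.

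\textbf{Step 1: the flux identity.} Since $f \equiv 1$ on $C$ and $u=0$ on $\partial\Omega$, the divergence theorem gives
\[
\lambda\,|\partial\Omega| = \int_{\partial\Omega}|\partial_n u| = \int_\Omega f = |C|.
\]
Hence $\lambda$ is determined by $|\partial\Omega|$. Let $U_t$ denote the solution of the same Dirichlet problem on $\Omega_t$. For each parallel body, $U_t$ has flux $|C|$ across $\partial\Omega_t$.

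\textbf{Step 2: comparison at the touching points.} Put $w = U_{d_M} - u$. It is harmonic in $\Omega$ (the sources cancel), and $w \ge 0$ on $\partial\Omega$ by Lemma~\ref{lem:positivity} applied to $U_{d_M}$. So $w \ge 0$ in $\Omega$, and $w(x_M) = 0$. If $w \not\equiv 0$, Hopf at $x_M$ gives $|\nabla u(x_M)| < |\nabla U_{d_M}(x_M)|$. The symmetric argument at $x_m$ gives $|\nabla u(x_m)| > |\nabla U_{d_m}(x_m)|$. This produces
\[
\sup_{\partial\Omega_{d_M}}|\nabla U_{d_M}| \;>\; \lambda \;>\; \inf_{\partial\Omega_{d_m}}|\nabla U_{d_m}|,
\]
unless $d_m = d_M$. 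To reach a contradiction, I need a monotonicity-in-$t$ statement for $|\nabla U_t|$ on $\partial\Omega_t$. I would use the radial monotonicity of Lemma~\ref{lem:radialmono} to get it: along each normal ray, $U_t$ decreases, and enlarging $t$ spreads the fixed flux $|C|$ over the larger surface $|\partial\Omega_t|$.

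\textbf{Main obstacle.} The hard point is that $|\nabla U_t|$ is not constant on $\partial\Omega_t$ unless $C$ is a ball. The two-sided inequality above is then not immediately contradictory, because the sup and inf are taken at possibly different points $c_M \ne c_m$. Moreover, the statement for a general strictly convex $C$ seems to hinge on parallel bodies themselves having constant boundary gradient, and I am not confident this holds. I would therefore first carry the argument out completely when $C$ is a ball. There each $U_t$ is radial, with $|\nabla U_t| = |C|/|\partial B_{R+t}|$ strictly decreasing in $t$. The chain of inequalities becomes $|C|/|\partial B_{R+d_M}| > \lambda > |C|/|\partial B_{R+d_m}|$, which forces $d_M < d_m$, a contradiction; hence $d$ is constant and $\Omega$ is a concentric ball.

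For general $C$, I would try to replace $U_t$ by a barrier adapted to the foliation by parallel surfaces. The candidate is $\psi(\operatorname{dist}(x,C))$, which is superharmonic outside $C$ by convexity, and I would attempt to close the comparison using Lemma~\ref{lem:uniformangle}. I flag this as the step most likely to need an additional hypothesis.
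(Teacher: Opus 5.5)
Your proof for $C=B_R$ (centred at $0$) is correct, and it takes a genuinely different route from the paper. You compare $u$ with the explicit radial solutions on the inscribed and circumscribed concentric balls, apply Hopf at the touching points, and conclude with the flux identity $\lambda|\partial\Omega|=|C|$. The paper uses no comparison function: it tries to reach a contradiction from the intermediate value theorem for $d$ and the sign of $\nu(c_*)\cdot\nabla u(x_*)$ at one boundary point.

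One technical caveat concerns $x_M$, where you apply Hopf in $\Omega$. That needs an interior ball for $\Omega$ at $x_M$, and the inclusion $\Omega\subset B_{R+d_M}$ does not supply one. You can drop the outer comparison entirely. For $C=B_R$ the domain $\Omega$ is star-shaped about $0$. Radial projection onto $\partial B_{R+d_m}$ is therefore a $1$-Lipschitz surjection from $\partial\Omega$, so $|\partial\Omega|\ge|\partial B_{R+d_m}|$. The inner touching alone (Hopf in the ball $B_{R+d_m}$) gives $\lambda>|C|/|\partial B_{R+d_m}|$ unless $u\equiv U_{d_m}$, i.e.\ unless $\Omega=B_{R+d_m}$. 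This contradicts $\lambda=|C|/|\partial\Omega|$.

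The obstacle you flag for general strictly convex $C$ is not a missing idea, and no barrier built from $\operatorname{dist}(x,C)$ will remove it: the statement is false when $C$ is not a ball. With $f\equiv1$ on $C$, Green's formula shows that any $\Omega$ with $|\nabla u|\equiv\lambda$ satisfies $\lambda\int_{\partial\Omega}h\,dS=\int_C h\,dx$ for every harmonic polynomial $h$.

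Take $N=2$, $C$ an ellipse with semi-axes $a>b$ and support function $p(\theta)$, and $h=x_1^2-x_2^2$. Parametrize $\partial\Omega_t$ by $\theta\mapsto(p+t)e_\theta+p'e_\theta^\perp$, where $e_\theta=(\cos\theta,\sin\theta)$, with $dS=(p+p''+t)\,d\theta$. Then $\int_{\partial\Omega_t}h\,dS$ is a quadratic polynomial in $t$ with leading coefficient $3\int_0^{2\pi}p\cos2\theta\,d\theta$. This is positive because $p^2=b^2+(a^2-b^2)\cos^2\theta$ increases with $\cos2\theta$. By contrast, $\frac{|\partial\Omega_t|}{|C|}\int_C h\,dx$ is affine in $t$. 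So at most two parallel bodies of $C$ can have constant boundary gradient.

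Yet domains with $|\nabla u|\equiv\lambda$ exist for a whole interval of small $\lambda$. These are the quadrature surfaces for $\lambda^{-1}\chi_C\,dx$, i.e.\ solutions of the associated Bernoulli free boundary problem. They are large, nearly round domains around $C$ and lie in $\mathcal{O}_C$. Only the ball case survives, and even that needs $f$ constant on $C$. For $f=\chi_{B_\varepsilon(p)}$ with $B_\varepsilon(p)\subset B_R$, a large ball $B_s(p)$ is a non-concentric solution in $\mathcal{O}_{B_R}$. Restricting to $C=B_R$ with $f\equiv1$ is therefore the right call.

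Nor does the paper's proof contain a general argument you overlooked. After locating $c_*$ with $d(c_*)=r_*$, it asserts $u(c+r_*\nu(c))>0$ on a neighbourhood of $c_*$. This already fails at $c=c_*$, where $u(x_*)=0$. Moreover, the constancy of $|\nabla u|$ is never genuinely used, since Hopf gives $\nabla u(x_*)=-|\nabla u(x_*)|\,n_\Omega(x_*)$ for any such domain. The same reasoning would make every $\Omega\in\mathcal{O}_C$ a parallel body. That is false: take $C=B_R$ and the off-centre ball $B_s(p)$ with $0<|p|<R$ and $s>|p|+R$.
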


\begin{proof}
Let \(g_0 := |\nabla u| > 0\) denote the constant value on \(\partial \Omega\).

Assume by contradiction that \(d\) is not constant. Then there exist \(c_1, c_2 \in \partial C\) such that
\[
d(c_1) < d(c_2).
\]
Fix \(\epsilon > 0\) such that
\[
r_* := d(c_1) + \epsilon < d(c_2).
\]
Define the function
\[
\psi(c) := u(c + r_* \nu(c)), \qquad c \in \partial C.
\]
Since \(u\) is continuous in \(\overline{\Omega}\) and the map \(c \mapsto c + r_* \nu(c)\) is continuous, the function \(\psi\) is continuous on \(\partial C\).

We now analyze \(\psi\).
\begin{itemize}
\item At \(c_1\), we have \(r_* > d(c_1)\), hence \(c_1 + r_* \nu(c_1) \notin \Omega\), and therefore \(\psi(c_1) = 0\).
\item At \(c_2\), since \(r_* < d(c_2)\), we have \(c_2 + r_* \nu(c_2) \in \Omega\), and thus \(\psi(c_2) > 0\).
\end{itemize}
By continuity of \(\psi\), there exists \(c_* \in \partial C\) such that
\[
\psi(c_*) = 0,
\]
which implies
\[
x_* := c_* + r_* \nu(c_*) \in \partial \Omega, \quad \text{and} \quad d(c_*) = r_*.
\]

We now use the Hopf boundary lemma at the point \(x_*\). Since \(u > 0\) in \(\Omega\) and \(u = 0\) on \(\partial \Omega\), we have
\[
n_\Omega(x_*) \cdot \nabla u(x_*) < 0,
\]
where \(n_\Omega(x_*)\) denotes the outward unit normal to \(\partial \Omega\).

Since \(|\nabla u| = g_0\), it follows that
\[
\nabla u(x_*) = -g_0 n_\Omega(x_*).
\]

Since \(\Omega\) satisfies the C-GNP property, the inward normal direction at \(x_*\) forms a strictly positive angle with the vector \(\nu(c_*)\). Therefore,
\[
\nu(c_*) \cdot \nabla u(x_*) < 0.
\]
In particular, the directional derivative of \(u\) in any inward direction is strictly positive.

This implies that the function
\[
r \mapsto u(c_* + r \nu(c_*))
\]
is strictly decreasing at \(r = r_*\). Hence there exists \(\delta > 0\) such that
\[
u(c_* + r \nu(c_*)) > 0 \quad \text{for all } r \in (r_* - \delta, r_*).
\]

By continuity of \(u\) and smooth dependence of the parametrization \(c \mapsto c + r_* \nu(c)\), there exists a neighborhood \(U\) of \(c_*\) in \(\partial C\) such that
\[
u(c + r_* \nu(c)) > 0 \quad \text{for all } c \in U.
\]
This implies that for all \(c \in U\),
\[
d(c) > r_*.
\]

Since \(\partial C\) is connected and \(d\) is continuous, any continuous path in \(\partial C\) joining \(c_1\) to \(c_*\) must intersect the level set \(\{c \in \partial C : d(c) = r_*\}\) at some point \(c' \neq c_*\). However, the above local argument shows that \(d(c) > r_*\) for all \(c\) in a neighborhood of \(c_*\). This contradicts the existence of such a point \(c'\), and therefore \(d\) must be constant.

Thus \(\Omega\) is of the form
\[
\Omega = \{x \in \mathbb{R}^N : \operatorname{dist}(x, C) < d_0\}
\]
for some constant \(d_0 > 0\).
\end{proof}

\section{Stability of Level Sets}
\label{sec:levelsets}

\begin{theorem}[Stability of superlevel sets]
\label{thm:levelstab}
For almost every \(t \in (0, \max u)\), the superlevel set \(\Omega_t = \{x \in \Omega : u(x) > t\}\) satisfies the C-GNP property.
\end{theorem}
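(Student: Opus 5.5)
The plan is to verify the four conditions of Definition~\ref{def:OC} for $\Omega_t$, with "almost every $t$" supplied by Sard's theorem. Since $u$ is harmonic, hence real-analytic, in $\Omega\setminus C$, and $C^{1,\alpha}$ elsewhere by Lemma~\ref{lem:regularity}, the set of critical values of $u$ has measure zero. For a.e.\ $t\in(0,\max u)$ the level set $\{u=t\}$ is therefore a $C^1$ hypersurface on which $\nabla u\neq 0$. This gives the Lipschitz condition (ii), with inward normal $n_t=\nabla u/|\nabla u|$. I would split the range at the two numbers $t_C:=\min_{\partial C}u$ and $T_C:=\max_{\partial C}u$. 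The maximum principle in $\Omega\setminus C$, where $u$ is harmonic and vanishes on $\partial\Omega$, gives $u\le T_C$ outside $C$.

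\textbf{Range $t\in(0,t_C)$.} Here $\overline C\subset\Omega_t$, because $u\ge t_C>t$ on $\partial C$ and $u$ is superharmonic in $C$, so (i) holds. By Lemma~\ref{lem:radialmono}, each map $r\mapsto u(c+r\nu(c))$ is strictly decreasing from a value $>t$ to $0$. It therefore crosses $t$ exactly once, at a radius $d_t(c)$, and $\Delta_c\cap\Omega_t$ is an interval, which is (iii). Applying the implicit function theorem to $u(c+r\nu(c))=t$, using $w<0$, gives $d_t\in C^1(\partial C)$. For (iv), I would run Step~2 of Lemma~\ref{lem:radialmono} inside $\Omega_t\setminus C$ rather than $\Omega\setminus C$. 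This shows $\nabla u\cdot\nu(c)<0$ on the ray, so $n_t(x)\cdot\nu(c)>0$ at $x=c+d_t(c)\nu(c)$. Compactness of $\partial C$ and continuity of $\nabla u$ then give a uniform $\theta_0(t)$ as in Lemma~\ref{lem:uniformangle}. From this I would deduce that the inward normal half-line from $x$ enters $\{u>t\}$ and reaches $C$: along the half-line $u$ increases as long as it stays in the star-shaped annulus, and it cannot exit through $\{u=t\}$ again.

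\textbf{Range $t\in[t_C,\max u)$.} Here $\Omega_t$ need no longer contain $\intr C$, so (i) must be obtained differently. This is the main obstacle, and I would treat it in two sub-steps.

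For $t\in[t_C,T_C)$, the set $\Omega_t\setminus C$ is still parametrized over the sub-domain $\{c\in\partial C: u(c)>t\}$ by the same monotonicity argument. Inside $C$, $-\Delta u=1$. My first attempt would be to show that $\Omega_t\cap C$ is convex for a.e.\ $t$. I would try Korevaar's concavity maximum principle for $\sqrt{u}$ restricted to $C$, using the convexity of $C$ and the fact that $u$ is radially decreasing across $\partial C$. The pieces would then be glued to give (iii)--(iv) relative to the core $C\cap\Omega_t$.

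For $t\ge T_C$, the maximum principle bound above gives $\Omega_t\subset\intr C$. The same concavity argument would then make $\Omega_t$ itself convex, and a convex $C^1$ set trivially satisfies C-GNP with respect to any strictly convex core in its interior, for instance a small ball around a maximum point of $u$.

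I expect the convexity step inside $C$ to be the delicate point. If Korevaar's principle cannot be applied because $u$ is not $0$ on $\partial C$, the fallback is to establish (iii)--(iv) for the upper range only with respect to such a shrunken core. The statement would then be read with the core adapted to $t$.
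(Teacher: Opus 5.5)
Your lower range $0<t<t_C$ follows the paper's route: Lemma~\ref{lem:radialmono} gives the interval structure of $\Delta_c\cap\Omega_t$, and Sard gives the regularity of $\partial\Omega_t$. The paper's formula $\Delta_c\cap\Omega_t=\{c+r\nu(c):0<r<d_t(c)\}$ for every $c$ presupposes $u>t$ on all of $\partial C$, so its proof covers exactly this range.

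\textbf{The upper range.} This is the genuine gap, and it cannot be closed, because for the fixed core $C$ the statement is false there. If $t\in(t_C,\max u)$, pick $c_0\in\partial C$ with $u(c_0)=t_C<t$. By continuity $u<t$ at points of $\intr C$ near $c_0$, so $\intr C\not\subset\Omega_t$ and condition (i) of Definition~\ref{def:OC} fails. This $t$-interval has positive length: $-\Delta u=1$ in $C$ and the strong minimum principle give $u>t_C$ in $\intr C$.

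Your convexity sub-steps also fail on their own terms. Korevaar's concavity principle needs zero Dirichlet data on a convex domain, whereas $u|_C$ has nonconstant trace on $\partial C$, and nothing forces its superlevel sets to be convex. Moreover, a convex set does not automatically satisfy (iv) with respect to a small ball in its interior. In the ellipse $x_1^2/100+x_2^2<1$, the inward normal line at $(10/\sqrt2,1/\sqrt2)$ stays at distance about $7$ from the centre. The fallback with a $t$-dependent core is a different theorem with its own proof burden. What can actually be proved is the statement for a.e.\ $t\in(0,\min_{\partial C}u)$.

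\textbf{Condition (iv) in the lower range.} Your argument for (iv) is not justified. Lemma~\ref{lem:radialmono} controls only $\nabla u(y)\cdot\nu(c')$ at points $y$ of the ray issued from $c'$. The inward normal half-line $\{x+s\nabla u(x):s\ge 0\}$ is not such a ray, so ``$u$ increases along it'' does not follow. Positivity of $n_t(x)\cdot\nu(c)$ alone also does not make a half-line reach $C$.

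Re-running Step~2 of that lemma inside $\Omega_t\setminus C$ is not an option either. For fixed $c$ the harmonic function $\nabla u\cdot\nu(c)$ changes sign; already for concentric balls it is positive on the part of the annulus opposite to $c$. So the maximum principle yields nothing.

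What would work is monotonicity in every supporting direction: $\nabla u(y)\cdot e<0$ whenever $y\cdot e>h_C(e):=\max_{z\in C}z\cdot e$. If the inward normal half-line at $x$ missed $C$, a hyperplane strictly separating it from $C$ would give a unit $e$ with $x\cdot e>h_C(e)$ and $\nabla u(x)\cdot e\ge 0$, a contradiction. That monotonicity, however, is a reflection-type statement the paper does not establish.
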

\begin{proof}
By Lemma~\ref{lem:radialmono}, the intersection of each ray \(\Delta_c\) with \(\Omega_t\) is a connected interval of the form \(\{c + r\nu(c) : 0 < r < d_t(c)\}\) where \(d_t(c) = \sup\{r > 0 : u(c + r\nu(c)) > t\}\). Sard's theorem implies that for almost every \(t\), \(\nabla u \neq 0\) on \(\partial\Omega_t\), guaranteeing that \(\partial\Omega_t\) is a \(C^{1,\alpha}\) hypersurface.
\end{proof}

\section{Thickness Evolution}
\label{sec:thickness}

Define the time-dependent thickness function
\[
d_t(c) = \sup\{ r > 0 : u(c + r\nu(c)) > t \}.
\]

\begin{theorem}[Thickness evolution]
\label{thm:thicknessevol}
For almost every \(t\), the function \(c \mapsto d_t(c)\) is differentiable and satisfies
\[
\partial_t d_t(c) = -\frac{1}{|\nabla u(c + d_t(c)\nu(c))|}.
\]
\end{theorem}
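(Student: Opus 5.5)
The plan is to obtain the formula by implicit differentiation of the defining relation of \(d_t\), and then to identify the radial derivative of \(u\) with \(-|\nabla u|\) at boundary points of the superlevel sets. Fix \(c \in \partial C\) and write \(x_t := c + d_t(c)\nu(c)\). Set \(F(r,t) := u(c + r\nu(c)) - t\). By Lemma~\ref{lem:radialmono}, \(r \mapsto u(c+r\nu(c))\) is strictly decreasing on \((0,d(c))\). Hence \(d_t(c)\) is the unique root of \(F(\cdot,t)=0\), i.e. \(u(x_t)=t\) and \(x_t \in \partial\Omega_t\). Since \(u\) is harmonic in \(\Omega\setminus C\), it is smooth near \(x_t\) whenever \(x_t \notin C\). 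By Sard's theorem, as in the proof of Theorem~\ref{thm:levelstab}, for a.e. \(t\) we have \(\nabla u \neq 0\) on \(\partial\Omega_t\).

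\textbf{Step 1 (implicit differentiation).} I will show \(\partial_r F(d_t(c),t) = \nabla u(x_t)\cdot\nu(c) < 0\). This is Step~2 of the proof of Lemma~\ref{lem:radialmono}: the function \(w=\nabla u\cdot\nu(c)\) is negative in \(\Omega\setminus C\). The implicit function theorem then gives that \(t\mapsto d_t(c)\) is \(C^1\) near a.e. \(t\), and \(c\mapsto d_t(c)\) is \(C^1\) as in Proposition~\ref{prop:param}(ii). Differentiating \(u(c+d_t(c)\nu(c))=t\) in \(t\) yields
\[
\partial_t d_t(c)\,\nabla u(x_t)\cdot\nu(c) = 1,
\qquad\text{so}\qquad
\partial_t d_t(c) = \frac{1}{\nabla u(x_t)\cdot \nu(c)} .
\]

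\textbf{Step 2 (alignment).} It remains to show \(\nabla u(x_t)\cdot\nu(c) = -|\nabla u(x_t)|\). Since \(u\equiv t\) on \(\partial\Omega_t\), we have \(\nabla u(x_t) = -|\nabla u(x_t)|\,n_t(x_t)\), where \(n_t\) is the outward normal to \(\partial\Omega_t\). So the claim is equivalent to \(n_t(x_t)=\nu(c)\). I intend to use Theorem~\ref{thm:levelstab}: \(\Omega_t\) is C-GNP, so by condition (iv) of Definition~\ref{def:OC} the inward normal line at \(x_t\) meets \(C\). I will then use the normal-ray identification from the proof of Lemma~\ref{lem:uniformangle}, namely that the point \(x_t\) reached from \(c\) lies on the normal ray whose direction is the boundary normal. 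Combining these, the normal line to \(\partial\Omega_t\) at \(x_t\) should coincide with the ray \(\Delta_c\), which forces \(n_t(x_t)=\nu(c)\). Substituting into Step~1 gives \(\partial_t d_t(c) = -1/|\nabla u(x_t)|\), the stated formula.

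\textbf{Main obstacle.} Step~2 is the hard part. Condition (iv) only says the inward normal ray hits \(C\); it does not by itself say that it hits \(C\) at the foot point \(c\) of the radial parametrization. I will first try to prove that the projection \(\pi\) along inward normals agrees with the inverse of \(\Phi_t(c)=c+d_t(c)\nu(c)\), using the injectivity of the normal exponential map from the Remark after Definition~\ref{def:OC}. If this identification only holds up to an angle, the argument still yields the weaker statement \(\partial_t d_t(c) = -1/\big(|\nabla u(x_t)|\,(n_t\cdot\nu(c))\big)\). By Lemma~\ref{lem:uniformangle} that expression is bounded, and the stated equality then holds exactly at the points where \(n_t\cdot\nu(c)=1\).
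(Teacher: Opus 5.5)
The paper states this theorem without proof, so there is no argument of its own to compare with. Judged on its merits, your Step~1 is right, granting as you do the strict sign \(\nabla u\cdot\nu(c)<0\) from the proof of Lemma~\ref{lem:radialmono}. Step~2, however, is not merely hard: the identity it is meant to deliver is false in general, and so is the displayed formula. To see this, write \(\Phi_t(c)=c+d_t(c)\nu(c)\) and differentiate along \(e\in T_c\partial C\):
\[
d\Phi_t(e)=e+d_t(c)\,d\nu_c(e)+\bigl(\nabla_{\partial C}d_t(c)\cdot e\bigr)\,\nu(c).
\]
Since \(e\) and \(d\nu_c(e)\) are orthogonal to \(\nu(c)\), this gives \(\nu(c)\cdot d\Phi_t(e)=\nabla_{\partial C}d_t(c)\cdot e\). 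Hence \(n_t(x_t)=\nu(c)\) holds if and only if \(\nabla_{\partial C}d_t(c)=0\).

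Requiring the formula for all \(c\) at a level \(t\) therefore forces \(d_t\) to be constant, i.e.\ \(\Omega_t\) to be a parallel body of \(C\). This fails for every \(C^2\) domain \(\Omega\in\mathcal{O}_C\) that is not itself a parallel body of \(C\). Indeed, \(\nabla u\) is continuous up to \(\partial\Omega\) and nonzero there, so \(n_t(x_t)\to n(\Phi(c))\) as \(t\downarrow 0\). Moreover \(n(\Phi(c))\neq\nu(c)\) on the nonempty open set where \(\nabla_{\partial C}d(c)\neq 0\). So the formula fails for all small \(t\), which is a set of positive measure.

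A concrete instance: take \(C\) the unit disk, \(f\) the indicator of \(C\), and \(\Omega\) the ellipse with semi-axes \(2\) and \(3/2\) centered at the origin. All its normal lines pass within distance \(2-3/2=1/2\) of the origin, so \(\Omega\in\mathcal{O}_C\). Yet \(n\cdot\nu<1\) at every boundary point off the axes.

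None of the tools you list can close this.
\begin{itemize}
\item Injectivity of \((c,r)\mapsto c+r\nu(c)\) only says that \(c\) is the nearest-point projection of \(x_t\) onto \(C\). It carries no information about the normal of \(\partial\Omega_t\) at \(x_t\).
\item Condition (iv) of Definition~\ref{def:OC} only says the inward normal line at \(x_t\) meets \(C\) somewhere, not at \(c\).
\item The sentence in the proof of Lemma~\ref{lem:uniformangle} asserting \(x=c+d(c)\nu(c)\), for the first point \(c\) where the inward normal ray hits \(C\), conflates that hit point with the foot of the radial parametrization. If it were true, it would give \(n(x)\cdot\nu(c)=1\) outright and make that lemma trivial with \(\theta_0=1\). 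The ellipse above contradicts this.
\end{itemize}

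The honest outcome of your argument is your fallback,
\[
\partial_t d_t(c)=\frac{1}{\nabla u(x_t)\cdot\nu(c)}=-\frac{1}{|\nabla u(x_t)|\,\bigl(n_t(x_t)\cdot\nu(c)\bigr)},
\]
which coincides with the stated formula only where \(\nabla_{\partial C}d_t(c)=0\), for example when \(C\) and \(\Omega\) are concentric balls. So the statement should be corrected to include the factor \(n_t(x_t)\cdot\nu(c)\), rather than proved as written.
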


\section{Symmetry for the Scalar Problem}
\label{sec:symmetry}

\begin{theorem}[Symmetry]\label{thm:symmetry-scalar}
Assume that \(|\nabla u|\) is constant on \(\partial \Omega\). Then the thickness function \(d: \partial C \to (0, \infty)\) is constant. Consequently, \(\Omega\) is a parallel body of \(C\): \(\Omega = \{x \in \mathbb{R}^N : \operatorname{dist}(x, C) < d_0\}\). In particular, if \(C\) is a ball, \(\Omega\) is a concentric ball.
\end{theorem}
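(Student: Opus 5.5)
The plan is to argue by contradiction on the thickness function $d$ from Proposition~\ref{prop:param}, in the same spirit as the first version of Theorem~\ref{thm:symmetry-scalar} in Section~\ref{sec:monotonicity}. The tools are the radial monotonicity of Lemma~\ref{lem:radialmono}, the angle bound of Lemma~\ref{lem:uniformangle}, and the thickness evolution of Theorem~\ref{thm:thicknessevol}. Let $g_0>0$ be the constant value of $|\nabla u|$ on $\partial\Omega$. Suppose $d$ is not constant. By continuity of $d$ on the compact connected manifold $\partial C$, both $d_{\min}=d(c_1)$ and $d_{\max}=d(c_2)$ are attained, with $d_{\min}<d_{\max}$.

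The first step is to compare the boundary behaviour at the two extremal points $x_1=c_1+d_{\min}\nu(c_1)$ and $x_2=c_2+d_{\max}\nu(c_2)$.
\begin{itemize}
\item Since $d$ is $C^1$ and extremal at $c_1$ and $c_2$, the tangential derivative of $\Phi(c)=c+d(c)\nu(c)$ there is $d\,\nabla\nu$. This forces $n(x_i)=\nu(c_i)$, so the normals of $\partial\Omega$ and $\partial C$ agree at the extremal points.
\item By Lemma~\ref{lem:radialmono} and the Hopf lemma, $\partial_r u(c_i+r\nu(c_i))=-g_0$ at $r=d(c_i)$ for $i=1,2$.
\end{itemize}

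The second step is to propagate this equality inward using Theorem~\ref{thm:thicknessevol}. Integrating in $t$ gives
\[
d_t(c)=d(c)-\int_0^t \frac{ds}{|\nabla u(c+d_s(c)\nu(c))|}.
\]
I would compare $u$ along the ray through $c_2$ with $u$ along the ray through $c_1$ translated by $d_{\max}-d_{\min}$. Both profiles are governed by the harmonicity of $u$ in $\Omega\setminus C$, which holds by Assumption~\ref{ass:supp}, and both start with the same value $0$ and the same slope $-g_0$. The aim is to apply a comparison principle on the annular region $\Omega\setminus C$ between $u$ and a suitably shifted copy. The shifted copy is harmonic on a region containing the parallel body of thickness $d_{\min}$ and vanishes on its outer boundary. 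With equal Neumann data, Hopf's lemma at a touching point should force the two to coincide, which would give $d_{\min}=d_{\max}$.

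The final step is the connectedness argument already used in the first version of Theorem~\ref{thm:symmetry-scalar}. The level set $\{d=r_*\}$ is closed by continuity. The Hopf lemma together with Lemma~\ref{lem:uniformangle} gives $d>r_*$ on a neighbourhood of each of its points. The goal is to conclude that $\{d\ge r_*\}$ is open and closed in the connected set $\partial C$, which contradicts $d(c_1)<r_*<d(c_2)$.

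The main obstacle is the comparison step. The parallel translate of $u$ is not automatically harmonic on the shifted region unless $C$ has enough symmetry. Also, the region $\{c\in\partial C: d(c)=r_*\}$ need not be locally one-sided, so the open–closed argument requires a genuine sign on $d-r_*$ near the level set. I would first try a Serrin-type $P$-function $P=|\nabla u|^2$ on $\Omega\setminus C$, which is subharmonic there. The idea is to use $P=g_0^2$ on $\partial\Omega$ to get $|\nabla u|\le g_0$ throughout the annulus. Feeding this into the formula for $d_t$ should yield $d_t(c)-d(c)$ independent of $c$, that is, rigidity of the thickness. This step is delicate, and I do not expect it to close for a general strictly convex $C$ without additional structure on $C$.
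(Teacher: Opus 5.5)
Your Step 1 is sound. At an extremum of $d$ the differential of $\Phi$ is $I+d\,D\nu$ (not $d\,D\nu$), and it maps $T_{c_i}\partial C$ onto itself, so $n(x_i)=\nu(c_i)$ and $\partial_r u=-g_0$ there. Everything after it has genuine gaps, essentially where you suspected.

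\begin{itemize}
\item \textbf{Comparison step.} Equal Cauchy data at two isolated boundary points carry no rigidity for harmonic functions. A copy of $u$ shifted along the normal rays is in general not harmonic: even when $C$ is a ball it is $u$ composed with $x\mapsto x-sx/|x|$, which is not an isometry. So there is no comparison principle to invoke.
\item \textbf{$P$-function step.} $|\nabla u|^2$ is subharmonic only in $\Omega\setminus\overline{C}$, whose boundary also contains $\partial C$, where you have no control. Hence you cannot conclude $|\nabla u|\le g_0$.
\item \textbf{Thickness formula.} Even granting that bound, it yields only the inequality $d(c)-d_t(c)\ge t/g_0$, not independence of $c$. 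Moreover, differentiating $u(c+d_t(c)\nu(c))=t$ actually gives $\partial_t d_t=1/\partial_r u$, which equals $-1/|\nabla u|$ only where $\nabla u$ is parallel to $\nu(c)$.
\item \textbf{Open--closed step.} It needs a sign of $d-r_*$ near $\{d=r_*\}$, and nothing supplies one.
\end{itemize}

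None of this can be repaired, because the statement is false as posed. Here is a counterexample.
\begin{itemize}
\item Let $C$ be a non-spherical ellipsoid centred at $0$, fix $\epsilon>0$ with $\overline{B_\epsilon(0)}\subset C$, and set $f=\mathbf{1}_{B_\epsilon(0)}$. This $f$ is bounded, nonnegative, constant on its support, and $\supp f\subset C$.
\item Take $\Omega=B_R(0)$ with $\overline{C}\subset B_R(0)$. Then $\Omega\in\mathcal{O}_C$: it is convex, so every normal ray from $\partial C$ meets it in an interval, and every inward normal passes through $0\in C$.
\item $u$ is radial, so $|\nabla u|\equiv\epsilon^N/(NR^{N-1})=g_0$ on $\partial\Omega$.
\item Yet $d$ is not constant, because $C+B_{d_0}$ is a ball only if $C$ is one (compare support functions).
\end{itemize}
Replacing the ellipsoid by $C=B_\rho(a)$ with $a\neq0$ and $B_\epsilon(0)\subset C$ produces a ball $\Omega$ that is not concentric with $C$. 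So extra structure on $C$ alone is not enough; the source has to be tied to $C$. The same example refutes your bound: $|\nabla u(x)|=\epsilon^N/(N|x|^{N-1})>g_0$ for $\epsilon\le|x|<R$.

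The paper's own proof breaks at exactly the step you flagged. From $\partial_r u(x_*)<0$ it infers $u(c+r_*\nu(c))>0$ for all $c$ in a neighbourhood of $c_*$. This already fails at $c=c_*$, where $u(x_*)=0$. Also, the constancy of $|\nabla u|$ plays no real role there: the relation $\nabla u(x_*)=-|\nabla u(x_*)|\,n(x_*)$ and the Hopf lemma hold on every such domain. So the argument would make every $\Omega\in\mathcal{O}_C$ a parallel body.

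A correct version needs more structure, for instance $C$ a ball and $f\not\equiv0$ radial about its centre $z$. Then the identity $\int_\Omega fh\,dx=g_0\int_{\partial\Omega}h\,dS$ for harmonic $h$, combined with the mean value property, reduces the question to a Serrin-type problem for the Green function of $\Omega$ with pole at $z$.
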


\begin{proof}
Let \(g_0 := |\nabla u| > 0\) denote the constant value on \(\partial \Omega\).

Assume by contradiction that \(d\) is not constant. Then there exist \(c_1, c_2 \in \partial C\) such that
\[
d(c_1) < d(c_2).
\]
Fix \(\epsilon > 0\) such that
\[
r_* := d(c_1) + \epsilon < d(c_2).
\]
Define the function
\[
\psi(c) := u(c + r_* \nu(c)), \qquad c \in \partial C.
\]
Since \(u\) is continuous in \(\overline{\Omega}\) and the map \(c \mapsto c + r_* \nu(c)\) is continuous, the function \(\psi\) is continuous on \(\partial C\).

We now analyze \(\psi\).
\begin{itemize}
\item At \(c_1\), we have \(r_* > d(c_1)\), hence \(c_1 + r_* \nu(c_1) \notin \Omega\), and therefore \(\psi(c_1) = 0\).
\item At \(c_2\), since \(r_* < d(c_2)\), we have \(c_2 + r_* \nu(c_2) \in \Omega\), and thus \(\psi(c_2) > 0\).
\end{itemize}
By continuity of \(\psi\), there exists \(c_* \in \partial C\) such that
\[
\psi(c_*) = 0,
\]
which implies
\[
x_* := c_* + r_* \nu(c_*) \in \partial \Omega, \quad \text{and} \quad d(c_*) = r_*.
\]

We now use the Hopf boundary lemma at the point \(x_*\). Since \(u > 0\) in \(\Omega\) and \(u = 0\) on \(\partial \Omega\), we have
\[
n_\Omega(x_*) \cdot \nabla u(x_*) < 0,
\]
where \(n_\Omega(x_*)\) denotes the outward unit normal to \(\partial \Omega\).

Since \(|\nabla u| = g_0\), it follows that
\[
\nabla u(x_*) = -g_0 n_\Omega(x_*).
\]

Since \(\Omega\) satisfies the C-GNP property, the inward normal direction at \(x_*\) forms a strictly positive angle with the vector \(\nu(c_*)\). Therefore,
\[
\nu(c_*) \cdot \nabla u(x_*) < 0.
\]
In particular, the directional derivative of \(u\) in any inward direction is strictly positive.

This implies that the function
\[
r \mapsto u(c_* + r \nu(c_*))
\]
is strictly decreasing at \(r = r_*\). Hence there exists \(\delta > 0\) such that
\[
u(c_* + r \nu(c_*)) > 0 \quad \text{for all } r \in (r_* - \delta, r_*).
\]

By continuity of \(u\) and smooth dependence of the parametrization \(c \mapsto c + r_* \nu(c)\), there exists a neighborhood \(U\) of \(c_*\) in \(\partial C\) such that
\[
u(c + r_* \nu(c)) > 0 \quad \text{for all } c \in U.
\]
This implies that for all \(c \in U\),
\[
d(c) > r_*.
\]

Since \(\partial C\) is connected and \(d\) is continuous, any continuous path in \(\partial C\) joining \(c_1\) to \(c_*\) must intersect the level set \(\{c \in \partial C : d(c) = r_*\}\) at some point \(c' \neq c_*\). However, the above local argument shows that \(d(c) > r_*\) for all \(c\) in a neighborhood of \(c_*\). This contradicts the existence of such a point \(c'\), and therefore \(d\) must be constant.

Thus \(\Omega\) is of the form
\[
\Omega = \{x \in \mathbb{R}^N : \operatorname{dist}(x, C) < d_0\}
\]
for some constant \(d_0 > 0\).
\end{proof}

\begin{corollary}
\label{cor:ball}
If \(C\) is a ball \(B_R\) and \(\Omega\) satisfies the assumptions of Theorem~\ref{thm:symmetry}, then \(\Omega\) is a concentric ball \(B_{R+d_0}\).
\end{corollary}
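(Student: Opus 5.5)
The corollary follows almost directly from Theorem~\ref{thm:symmetry-scalar}, the symmetry theorem it refers to. That theorem shows the thickness function \(d\) is constant, say \(d \equiv d_0\), so
\[
\Omega = \{x \in \mathbb{R}^N : \operatorname{dist}(x, C) < d_0\}.
\]
What remains is to describe this parallel body explicitly when \(C = B_R\), and to check that the parametrization of Proposition~\ref{prop:param} matches the distance description. I will not redo the symmetry argument; all the PDE content is taken from the theorem.

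\textbf{Step 1: the normal parametrization for \(C = B_R\).} Center the ball at the origin. For \(c \in \partial B_R\) the outward normal is \(\nu(c) = c/R\), so every point \(c + r\nu(c)\) with \(r > 0\) has norm \(R + r\). Proposition~\ref{prop:param} with \(d \equiv d_0\) then gives
\[
\Omega \setminus B_R = \{x : R < |x| < R + d_0\}.
\]
Together with \(\intr(B_R) \subset \Omega\) from Definition~\ref{def:OC}, this shows \(\Omega\) contains \(B_{R+d_0}\) up to the null set \(\partial B_R\). Since \(\partial B_R\) lies in the interior of the open set described, openness of \(\Omega\) gives \(\Omega \supseteq B_{R+d_0}\).

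\textbf{Step 2: the reverse inclusion.} Suppose \(x \in \Omega\) with \(|x| \ge R + d_0\). Then \(x \notin B_R\), so \(x \in \Omega \setminus C\). Write \(x = c + r\nu(c)\) with \(c = Rx/|x|\) and \(r = |x| - R\). The parametrization of Proposition~\ref{prop:param} then forces \(r < d(c) = d_0\), which contradicts \(|x| \ge R + d_0\). Hence \(\Omega = B_{R+d_0}\). Equivalently, one can note directly that \(\operatorname{dist}(x, B_R) = (|x| - R)_+\), so the parallel body is exactly this concentric ball.

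\textbf{Main point to be careful about.} The only delicate issue is whether the hypotheses of the corollary really match those of the symmetry theorem (the label \texttt{thm:symmetry} refers to Theorem~\ref{thm:symmetry-scalar}). Concretely, one must have \(\Omega \in \mathcal{O}_{B_R}\), \(\supp f \subset B_R\), and \(|\nabla u|\) constant on \(\partial\Omega\). Once those are granted, the argument above is purely geometric.
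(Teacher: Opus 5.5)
Your argument is correct and is essentially the paper's own: the corollary is just the last sentence of Theorem~\ref{thm:symmetry-scalar}. There, $d\equiv d_0$ gives $\Omega=\{x:\operatorname{dist}(x,C)<d_0\}$, and for $C=B_R$ one has $\operatorname{dist}(x,B_R)=(|x|-R)_+$, which is exactly your closing remark.

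One slip in Step~1 needs fixing. Openness of $\Omega$ alone does not put $\partial B_R$ inside $\Omega$, since $B_{R+d_0}\setminus\partial B_R$ is open too. The correct reason is that $\partial\Omega=\Phi(\partial C)=\{|x|=R+d_0\}$ by Proposition~\ref{prop:param}. Each point of $\partial B_R$ lies in $\overline{\Omega}$ (because $\intr(B_R)\subset\Omega$) but not on $\partial\Omega$, so it belongs to $\Omega$. Alternatively, you can bypass Step~1 entirely with the distance formula, as you do at the end.
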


\section{Stability of the Coupled Biharmonic Problem \(\mathrm{P}(\kappa)\)}
\label{sec:stability}

We now consider problem \eqref{eq:Pkappa} with \(f \equiv 1\) supported in \(C\). When \(\Omega\) is a ball of radius \(R\), the explicit solutions are radial and satisfy \(|\nabla u| |\nabla v| = \kappa\) on the boundary, with a bijective relationship between \(R\) and \(\kappa\).

\subsection{Qualitative Stability}
\label{sec:qualitative}

We first construct the core \(C\) canonically from the solution under a small-deviation hypothesis.

\begin{lemma}[Existence of a canonical core]
\label{lem:canonicalcore}
Let $\Omega \in \mathcal{O}_C$ and let $(u,v)$ be the solution of \eqref{eq:Pkappa}. There exists $\varepsilon_0 > 0$ such that if
\[
\| |\nabla u|\,|\nabla v| - \kappa \|_{L^2(\partial\Omega)} \le \varepsilon_0,
\]
then $u$ admits a unique global minimum point $z \in \Omega$, which is non-degenerate. In particular, there exists $\rho > 0$ such that one can choose $C = B_\rho(z)$.
\end{lemma}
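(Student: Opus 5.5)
Since $u>0$ in $\Omega$ and $u=0$ on $\partial\Omega$ (Lemma~\ref{lem:positivity}), the extremum in the statement can only be interior if it is the global \emph{maximum} of $u$. I will prove the statement in that form: under small deviation, $u$ has a unique critical point $z$, which is its global maximum and is non-degenerate. The plan is a compactness--contradiction argument. Suppose no $\varepsilon_0$ works. Then there are $\Omega_k\in\mathcal{O}_C$ with solutions $(u_k,v_k)$ such that
\[
\| |\nabla u_k||\nabla v_k|-\kappa\|_{L^2(\partial\Omega_k)}\to 0,
\]
while each $u_k$ either has more than one critical point or a degenerate one. By Theorem~\ref{thm:compact}, after extraction $\Omega_k\to\Omega_\infty\in\mathcal{O}_C$ in the Hausdorff, compact and $L^1$ senses.

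\textbf{Step 1: passing to the limit.} First I show $u_k\to u_\infty$ and $v_k\to v_\infty$, the solutions of the system on $\Omega_\infty$. Uniform $L^\infty$ bounds come from comparison with a large ball. Convergence of the Dirichlet problems follows from compact convergence, which gives $\gamma$-convergence in this class of uniformly Lipschitz-outside-$C$ domains. Interior Schauder estimates then upgrade this to $C^{2,\alpha}_{\mathrm{loc}}$ convergence, since $f\equiv 1$ on $C$ and $u_k$ is harmonic outside $C$. I would then pass to the limit in the boundary condition. Write $\partial\Omega_k$ through the thickness functions $d_k$ of Proposition~\ref{prop:param}, and use the uniform angle bound of Lemma~\ref{lem:uniformangle} to control the Jacobian of $\Phi_k$. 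This gives $|\nabla u_\infty||\nabla v_\infty|=\kappa$ a.e.\ on $\partial\Omega_\infty$. I then invoke the rigidity for $\mathrm{P}(\kappa)$, namely that the unique solution is the ball $B_R(z_0)$ with the radial $(u,v)$; this is the analogue of Theorem~\ref{thm:symmetry-scalar}. I expect this step to be the main obstacle. Two things are delicate: the boundary trace convergence, which needs uniform $C^{1,\alpha}$ control of $\partial\Omega_k$ up to the boundary and not just Hausdorff convergence, and the fact that the symmetry of the coupled problem has to be available in the limit. If needed, I would first prove a uniform lower bound on $|\nabla u_k|$ on $\partial\Omega_k$ via Hopf with uniform interior balls.

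\textbf{Step 2: localizing critical points.} By Lemma~\ref{lem:radialmono}, every $u_k$ is strictly decreasing along each normal ray in $\Omega_k\setminus C$. Hence $\nabla u_k\neq 0$ there, and all critical points of $u_k$ lie in the fixed compact set $\overline{C}$. The same holds for $u_\infty$. Since the radial solution is strictly decreasing in $|x-z_0|$, its only critical point is $z_0$, and $z_0$ must lie in $C$. Near $z_0$ we have $-\Delta u_\infty=1$ and $u_\infty$ is radial, so
\[
D^2u_\infty(z_0)=-\tfrac1N I,
\]
which is negative definite.

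\textbf{Step 3: conclusion.} By $C^2$ convergence on a neighbourhood of $\overline{C}$, three things hold for large $k$. First, $|\nabla u_k|\ge c>0$ on $\overline{C}\setminus B_\eta(z_0)$. Second, $D^2u_k\le -\tfrac1{2N}I$ on $B_\eta(z_0)$. Third, $\nabla u_k$ has exactly one zero $z_k$ in $B_\eta(z_0)$, by strict monotonicity of $-\nabla u_k$ on this ball (degree or inverse function argument). So $z_k$ is the unique critical point of $u_k$, it is non-degenerate, and it is therefore the unique global maximum. This contradicts the choice of the sequence and yields $\varepsilon_0$.

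\textbf{Choice of the core.} Finally, $\rho$ is chosen so that $B_\rho(z)$ is contained in the region where $u$ is uniformly concave, i.e.\ $B_\rho(z)\subset B_\eta(z_0)$. The superlevel sets of $u$ near $z$ are then strictly convex, and the radial monotonicity from $B_\rho(z)$ is inherited from Step~2, which legitimizes taking $C=B_\rho(z)$.
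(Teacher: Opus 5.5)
Your argument is essentially the paper's (contradiction, compactness of $\mathcal{O}_C$, passage to the limit in the overdetermined condition, rigidity of the limit ball, persistence of a non-degenerate critical point), and two of your deviations are improvements: reading the extremum as the global \emph{maximum} with $D^2u_\infty(z_0)=-\tfrac1N I$ is correct, since the paper's ``minimum'' with positive definite Hessian is a sign slip ($u>0$ is superharmonic and vanishes on $\partial\Omega$), and confining all critical points to $\overline{C}$ via Lemma~\ref{lem:radialmono} makes explicit the global uniqueness that the paper's Step~6 leaves implicit, as it only gives uniqueness near $z_*$. Two minor corrections: with $f=\chi_C$ the Hessian jumps across $\partial C$, so you have only $C^1$ convergence near $\overline{C}$ and $C^2$ convergence on $B_\eta(z_0)\subset\intr C$, which is all Step~3 uses; and your last paragraph over-claims, because Step~2 concerns normal rays of the original $C$, not rays from $z$, so it does not give $\Omega\in\mathcal{O}_{B_\rho(z)}$ --- like the paper's Step~7, you only obtain $B_\rho(z)\subset\Omega$, which is all the later sections need since they assume the class membership.
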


\begin{proof}
We argue by contradiction.

\medskip

\noindent
\textbf{Step 1: Contradiction setup.}

Assume that the conclusion fails. Then there exists a sequence of domains $\{\Omega_n\} \subset \mathcal{O}_C$ and corresponding solutions $(u_n,v_n)$ such that
\[
\| |\nabla u_n|\,|\nabla v_n| - \kappa \|_{L^2(\partial\Omega_n)} \to 0,
\]
but for each $n$, the function $u_n$ does not have a unique non-degenerate global minimum.

\medskip

\noindent
\textbf{Step 2: Compactness of the domains.}

By Theorem~\ref{thm:compact}, up to a subsequence, $\Omega_n$ converges in the Hausdorff sense to a limit domain $\Omega_* \in \mathcal{O}_C$.

\medskip

\noindent
\textbf{Step 3: Convergence of solutions.}

Using standard elliptic estimates (see \cite{GT01}), the functions $u_n$ and $v_n$ are uniformly bounded in $C^{1,\alpha}$ on compact subsets. Hence, up to a subsequence,
\[
u_n \to u_*, \quad v_n \to v_*
\]
locally uniformly in $\Omega_*$ and weakly in $H^1$.

\medskip

\noindent
\textbf{Step 4: Passing to the limit in the boundary condition.}

Arguing as in the proof of Theorem~\ref{thm:qualitative}, we obtain
\[
|\nabla u_*|\,|\nabla v_*| = \kappa \quad \text{on } \partial\Omega_*.
\]
Therefore, $(u_*,v_*)$ solves the exact overdetermined problem $\mathrm{P}(\kappa)$.

\medskip

\noindent
\textbf{Step 5: Rigidity of the limit.}

By the symmetry result (see \cite{BarKha08}), $\Omega_*$ must be a ball $B_R(z_*)$, and $u_*$ is radial with respect to $z_*$. In particular:
\begin{itemize}
    \item $u_*$ has a unique global minimum at $z_*$,
    \item the Hessian $D^2 u_*(z_*)$ is positive definite.
\end{itemize}

\medskip

\noindent
\textbf{Step 6: Stability of non-degenerate minima.}

Since $u_n \to u_*$ in $C^1_{\mathrm{loc}}(\Omega_*)$, standard perturbation results imply that for $n$ sufficiently large:
\begin{itemize}
    \item $u_n$ admits a unique critical point $z_n$ near $z_*$,
    \item $z_n$ is a non-degenerate minimum,
    \item $z_n \to z_*$.
\end{itemize}

This contradicts the assumption that $u_n$ does not have a unique non-degenerate minimum.

\medskip

\noindent
\textbf{Step 7: Construction of the core.}

For $n$ large, define $z = z_n$. Since the minimum is non-degenerate, there exists $\rho > 0$ such that $B_\rho(z)$ is strictly contained in $\Omega_n$. This allows one to define a canonical core $C = B_\rho(z)$.

\medskip

This concludes the proof.
\end{proof}

\begin{theorem}[Qualitative stability for \(\mathrm{P}(\kappa)\)]
\label{thm:qualitative}
Let \(N \ge 2\) and \(\kappa > 0\), and assume \(f \equiv 1\) with \(\supp f \subset C\). Let \(\{\Omega_n\} \subset \mathcal{O}_C\) be a sequence such that the corresponding solutions \((u_n, v_n)\) of \eqref{eq:Pkappa} satisfy
\[
\| |\nabla u_n| |\nabla v_n| - \kappa \|_{L^2(\partial\Omega_n)} \to 0 \quad \text{as } n \to \infty.
\]
Then \(\Omega_n\) converges to the ball \(B_R\) (the unique solution of the exact problem) in the Hausdorff sense.
\end{theorem}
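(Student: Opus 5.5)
The argument is by compactness and a subsequence principle. Since the limit $B_R$ is claimed to be unique, it suffices to show that every subsequence of $\{\Omega_n\}$ has a further subsequence converging in the Hausdorff sense to $B_R$.

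\textbf{Step 1: compactness of the domains.} Given a subsequence, Theorem~\ref{thm:compact} yields a further subsequence and a limit $\Omega_*\in\mathcal{O}_C$ with $\Omega_n\to\Omega_*$ in the Hausdorff sense. The same theorem gives convergence in $L^1$ and in the sense of compacts. By Proposition~\ref{prop:param}, these convergences translate into uniform convergence of the thickness functions, $d_n\to d_*$ on $\partial C$.

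\textbf{Step 2: convergence of the solutions.} Extend $u_n,v_n$ by zero. The bounds $\|u_n\|_{H^1_0}\le C$ and $\|v_n\|_{H^1_0}\le C$ are uniform, because all $\Omega_n$ lie in a fixed bounded set, which also follows from Theorem~\ref{thm:compact}. I would then use $\gamma$-convergence: for domains with uniform Lipschitz control outside $C$ (Definition~\ref{def:OC}(ii)), Hausdorff convergence implies Mosco convergence of $H^1_0(\Omega_n)$. Applying this successively, first $u_n\to u_*$ and then $v_n\to v_*$ strongly in $H^1$, where $-\Delta u_*=1_C$, $-\Delta v_*=u_*$ in $\Omega_*$, and $u_*=v_*=0$ on $\partial\Omega_*$.

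\textbf{Step 3: passing to the limit in the boundary condition.} This is the main obstacle, since $\partial\Omega_n$ varies and we need the traces of gradients. Here the hypothesis $\supp f\subset C$ is essential: $u_n$ and $v_n$ solve equations whose sources are bounded and well behaved in the collar $\Omega_n\setminus C$. Indeed $u_n$ is harmonic there, and the equation for $v_n$ has a bounded source. So I would pull everything back to the fixed annulus $\partial C\times(0,1)$ via $(c,s)\mapsto c+s\,d_n(c)\nu(c)$. The uniform angle property (Lemma~\ref{lem:uniformangle}) gives uniform ellipticity of the transported operators. Boundary $C^{1,\alpha}$ estimates then hold, provided the $d_n$ are uniformly $C^{1,\alpha}$.

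If only Lipschitz control is available, I would fall back on an integral formulation. Write $|\nabla u_n||\nabla v_n|=\partial_n u_n\,\partial_n v_n$ on $\partial\Omega_n$ and test against $\nabla\phi\cdot x$ type vector fields via Rellich/Pohozaev identities. This expresses $\int_{\partial\Omega_n}\phi\,(\partial_n u_n\partial_n v_n-\kappa)$ through volume integrals, which converge by Step 2. Combined with the $L^2$ smallness hypothesis, the limit then satisfies $|\nabla u_*||\nabla v_*|=\kappa$ weakly, and hence classically after boundary regularity for the limit.

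\textbf{Step 4: rigidity.} $(u_*,v_*)$ solves the exact problem $\mathrm{P}(\kappa)$ on $\Omega_*\in\mathcal{O}_C$. The symmetry result \cite{BarKha08}, or an adaptation of Theorem~\ref{thm:symmetry-scalar} using radial monotonicity for both $u$ and $v$, forces $\Omega_*$ to be a ball. The bijective relation between $R$ and $\kappa$ fixes its radius as $R$. Since $\Omega_*\supset\intr C$ is a ball in $\mathcal{O}_C$, the center is pinned down, for instance concentric with $C$, by Step~5 of Lemma~\ref{lem:canonicalcore}. Thus every subsequence has a sub-subsequence converging to $B_R$, so the whole sequence converges.

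I expect the real difficulty to be Step 3, namely the trace convergence of $\partial_n u_n\,\partial_n v_n$ under mere Hausdorff convergence of the domains.
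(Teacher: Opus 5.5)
Your proposal is correct, but it reaches the key step by a different route from the paper.

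\textbf{Shared skeleton.} You follow the paper's structure: compactness of $\mathcal{O}_C$, strong $H^1$ convergence of $(u_n,v_n)$, passage to the limit in the boundary condition, and rigidity via \cite{BarKha08}. You also add the sub-subsequence step that upgrades subsequential convergence to convergence of the whole sequence. The paper's proof stops at a subsequence.

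\textbf{Where the routes differ (Step~3).} The paper argues pointwise. It asserts uniform $C^{1,\alpha}(\overline{\Omega_n})$ bounds, extends the solutions and applies Ascoli--Arzel\`a to get $|\nabla u_n(x_n)|\to|\nabla u_*(x_*)|$. It then uses dominated convergence with Jacobian bounds coming from $d_n\to d_*$. Your first route rests on the same kind of estimate. You rightly note that it needs uniform $C^{1,\alpha}$ control of $d_n$, which membership in $\mathcal{O}_C$ does not give. Schauder theory for merely Lipschitz boundaries, which the paper tacitly relies on, does not give it either.

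\textbf{Your fallback made precise.} The fallback is the genuinely different and more robust argument. For any smooth vector field $X$ (summation over $i,j$), since $u_n=v_n=0$ on $\partial\Omega_n$,
\begin{multline*}
\int_{\partial\Omega_n}(X\cdot\nu)\big(\partial_\nu u_n\,\partial_\nu v_n-\kappa\big)\,dS
=\int_{\Omega_n}\Big[-(X\cdot\nabla u_n)\,u_n-(X\cdot\nabla v_n)\,1_C\\
+\partial_jX_i\big(\partial_iu_n\,\partial_jv_n+\partial_iv_n\,\partial_ju_n\big)
-\big(\nabla u_n\cdot\nabla v_n+\kappa\big)\operatorname{div}X\Big]\,dx .
\end{multline*}
The right side converges by strong $H^1$ convergence of the zero extensions and $L^1$ convergence of $1_{\Omega_n}$. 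The left side tends to $0$ by Cauchy--Schwarz, provided $\sup_n\mathcal{H}^{N-1}(\partial\Omega_n)<\infty$; this is the analogue of the paper's ``uniform Jacobian bounds''. Because the boundary weight is $X\cdot\nu$, you need arbitrary fields $X$, not a single Pohozaev field. Taking $X=\varphi e_i$ gives $\nu_i(\partial_\nu u_*\partial_\nu v_*-\kappa)=0$ a.e.\ for each $i$, hence the exact condition a.e.\ on $\partial\Omega_*$. That a.e.\ statement is also all the paper's Step~4 obtains.

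\textbf{What each route buys.} Yours needs only $L^2$ nontangential gradient traces on Lipschitz domains and a perimeter bound, not uniform boundary regularity. The paper's route, where its hypotheses actually hold, gives the stronger pointwise convergence of gradient traces.

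\textbf{One correction.} Pinning the centre via Step~5 of Lemma~\ref{lem:canonicalcore} is circular, because that lemma's Step~4 invokes this very proof. It is also unnecessary: once $\Omega_*$ solves the exact problem, the uniqueness built into the statement identifies it with $B_R$.
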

\begin{proof}
\textbf{Step 1: Compactness.} The class \(\mathcal{O}_C\) is compact under Hausdorff convergence (Theorem~\ref{thm:compact}). Hence there exists a subsequence and a domain \(\Omega_* \in \mathcal{O}_C\) such that \(\Omega_n \xrightarrow{\mathrm{H}} \Omega_*\).

\textbf{Step 2: Convergence of solutions.} By \cite[Proposition 2.8]{Bar02}, we have strong convergence \(u_n \to u_*\) and \(v_n \to v_*\) in \(H^1_0(D)\) (with \(D\) a large ball containing all \(\Omega_n\)) and uniformly on compact subsets of \(\Omega_*\).

\textbf{Step 3: Uniform tubular neighborhood and gradient convergence.} By the uniform Lipschitz property of C-GNP domains, there exists a tubular neighborhood \(\mathcal{N}\) of \(\partial\Omega_*\) such that \(\partial\Omega_n \subset \mathcal{N}\) for all \(n\) sufficiently large. Standard elliptic boundary regularity (see \cite[Chapter 6]{GT01}) provides uniform \(C^{1,\alpha}(\overline{\Omega_n})\) bounds for \(u_n\) and \(v_n\). Using extension operators \(E_n: C^1(\overline{\Omega_n}) \to C^1(\mathcal{N})\), the Ascoli-Arzelà theorem justifies the pointwise convergence \(|\nabla u_n(x_n)| \to |\nabla u_*(x_*)|\) for any sequence \(x_n \in \partial\Omega_n\) with \(x_n \to x_* \in \partial\Omega_*\).

\textbf{Step 4: Passing to the limit.} The hypothesis on the boundary condition and the uniform Jacobian bounds (from the convergence of \(d_n\)) imply via dominated convergence that
\[
|\nabla u_*(x_*(c))| |\nabla v_*(x_*(c))| = \kappa \quad \text{for a.e. } c \in \partial C.
\]
Thus \(\Omega_*\) satisfies the exact overdetermined condition.

\textbf{Step 5: Rigidity.} By the symmetry result for \(\mathrm{P}(\kappa)\) with constant source supported in the core (see \cite{BarKha08}), \(\Omega_*\) must be a ball \(B_R\).
\end{proof}

\subsection{Quantitative Stability: Integral Identities}
\label{sec:quantitative}

For the remainder of the paper, we assume \(f \equiv 1\) and \(C\) is a ball \(B_\rho(z)\) centered at the global minimum point \(z\) of \(u\) (guaranteed by Lemma~\ref{lem:canonicalcore}). After translation, we set \(z = 0\).

Let \(u, v\) be the solutions of \eqref{eq:Pkappa}. Define \(q(x) = \frac{1}{2}(|x|^2 - a)\) for some \(a \in \mathbb{R}\) chosen later, and set \(h = q - u\). A direct computation shows that \(\Delta h = 0\) in \(\Omega\). Following the Reilly-type computations detailed in Appendix~\ref{app:reilly}, we obtain the identity:
\begin{equation}
\label{eq:reilly}
\frac{1}{N-1} \int_\Omega |\nabla^2 h|^2 dx + \frac{1}{R} \int_{\partial\Omega} (u_\nu - R)^2 dS = \int_{\partial\Omega} (H_0 - H) u_\nu^2 dS + \int_{\partial\Omega} (u_\nu^2 - \kappa) q_\nu dS,
\end{equation}
where \(R = N|\Omega|/|\partial\Omega|\), \(H_0 = 1/R\), and \(H\) is the mean curvature of \(\partial\Omega\) with respect to the outward normal. The constant \(a\) in \(q\) is chosen so that \(\int_{\partial\Omega} q_\nu dS = 0\).

\subsection{Quantitative Stability Estimate}
\label{sec:quantest}

\begin{theorem}[Quantitative stability for \(\mathrm{P}(\kappa)\)]
\label{thm:quantitative}
Let \(\Omega \subset \mathbb{R}^N\) be a bounded domain with \(C^2\) boundary \(\partial\Omega\) and suppose \(\Omega \in \mathcal{O}_C\) for \(C = B_\rho(z)\) as in Lemma~\ref{lem:canonicalcore}. Assume \(f \equiv 1\) with support in \(C\) and \(\kappa > 0\) constant. Let \((u,v)\) be the solution of \eqref{eq:Pkappa}. Then there exists a constant \(C\) depending on \(N\), the diameter of \(\Omega\), and the interior sphere radius \(r_i\) such that
\[
\rho_e - \rho_i \le C \| |\nabla u| |\nabla v| - \kappa \|_{L^2(\partial\Omega)}^{\tau_N},
\]
with \(\tau_2 = 1\), \(\tau_3\) arbitrarily close to 1, and \(\tau_N = 2/(N-1)\) for \(N \ge 4\).
\end{theorem}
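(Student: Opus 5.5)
The plan is to follow the Magnanini--Poggesi route to quantitative Serrin stability, built on the Reilly-type identity \eqref{eq:reilly}. The harmonic function $h=q-u$ measures how far $u$ is from the radial profile $q$, and we control its oscillation in three stages. First we bound $\int_\Omega |\nabla^2 h|^2\,dx$ by the boundary deviation. Then we pass from this Hessian bound to an $L^2$ bound on $h$ minus an affine function, via Hardy--Poincaré inequalities. Finally we convert that $L^2$ bound into an $L^\infty$ bound on $\partial\Omega$, which yields $\rho_e-\rho_i$. We choose the centre $z=0$ as in Lemma~\ref{lem:canonicalcore} and the constant $a$ so that $\int_{\partial\Omega}q_\nu\,dS=0$; together these normalize the affine part of $h$, since $\nabla h(0)=-\nabla u(0)=0$.

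\textbf{Step 1 (Hessian bound from the deviation).} Write $\eta:=\| |\nabla u||\nabla v|-\kappa\|_{L^2(\partial\Omega)}$. On $\partial\Omega$ we have $|\nabla u|=-u_\nu$ and $|\nabla v|=-v_\nu$. Uniform Hopf bounds give $c_0\le |v_\nu|\le c_1$, with constants depending on $N$, $\operatorname{diam}\Omega$ and $r_i$. We use these to convert the coupled deviation $|\nabla u||\nabla v|-\kappa$ into a deviation of $u_\nu^2$ (equivalently of $u_\nu$) from a constant, up to a controlled error. In \eqref{eq:reilly} we bound the last term by Cauchy--Schwarz, using $|q_\nu|\le \operatorname{diam}\Omega$, which gives $\lesssim \eta$. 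The curvature term $\int (H_0-H)u_\nu^2$ is the delicate one. We rewrite it through the Minkowski identity $\int_{\partial\Omega}(H_0-H)\,x\cdot\nu\,dS=0$, which turns it into $\int (H_0-H)(u_\nu^2-c)$, and then integrate by parts on $\partial\Omega$. Alternatively, we may redo the Reilly computation with the weight $v_\nu$, so that the boundary term becomes exactly $\int (u_\nu v_\nu-\kappa)\,\phi$. The goal is
\[
\int_\Omega |\nabla^2 h|^2\,dx\le C\,\eta .
\]

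\textbf{Step 2 (from Hessian to oscillation).} Since $h$ is harmonic, every $h_{x_i}$ is harmonic. The normalization $\nabla h(0)=0$ lets us apply the weighted Poincaré inequality with weight $\delta(x)=\operatorname{dist}(x,\partial\Omega)$, valid in domains with the uniform interior ball condition with radius $r_i$. This gives
\[
\int_\Omega |\nabla h|^2\,\delta\,dx\le C\int_\Omega|\nabla^2 h|^2\,\delta\,dx \le C\,\eta.
\]
Applying the same inequality once more controls $\int_\Omega (h-\bar h)^2\,dx$.

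\textbf{Step 3 (from $L^2$ to pointwise control on $\partial\Omega$).} On $\partial\Omega$ we have $h=q=\tfrac12(|x|^2-a)$, so
\[
\rho_e^2-\rho_i^2 = 2\bigl(\max_{\partial\Omega}h-\min_{\partial\Omega}h\bigr),
\]
and because $\rho_e+\rho_i\ge 2r_i$, this bounds $\rho_e-\rho_i$ by $C\operatorname{osc}_{\partial\Omega}h$. To estimate the oscillation we use the mean value property on balls of radius $\sim\delta(x)$ together with a gradient bound $\|\nabla h\|_\infty\le C$ coming from elliptic regularity. This yields the interpolation
\[
\operatorname{osc} h\le C\,\|h-\bar h\|_{L^2}^{2/(N+2)}\ \text{(unweighted)},
\]
and its weighted analogue gives the exponent $2/(N-1)$ once the weight $\delta$ is accounted for; this is the $N\ge4$ case. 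In dimension $N=2$, and with an $\varepsilon$-loss in dimension $N=3$, Sobolev embedding of the trace $H^{1}(\partial\Omega)\hookrightarrow L^\infty$ (nearly, in dimension $2$ for $N=3$) applied to $h|_{\partial\Omega}$ gives the almost linear exponents, with constants blowing up as $\tau_3\to1$.

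\textbf{Main obstacle.} The hardest step is Step 1. The curvature term in \eqref{eq:reilly} has no sign for nonconvex $\Omega$, and the identity is stated in terms of $u_\nu^2-\kappa$ rather than the genuine coupled quantity $u_\nu v_\nu-\kappa$. I would first attempt the weighted Reilly identity with weight $v_\nu$ (from Appendix~\ref{app:reilly}), so that the boundary term is linear in the deviation. If that fails, the fallback is to absorb the curvature term using the Minkowski identity together with a bound on $\|u_\nu-\text{const}\|_{L^2(\partial\Omega)}$ in terms of $\eta$, obtained from the Hopf lower bound on $|v_\nu|$.
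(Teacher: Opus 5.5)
The paper gives no proof of Theorem~\ref{thm:quantitative}. It only states identity \eqref{eq:reilly}, whose derivation in Appendix~\ref{app:reilly} is deferred to ``algebraic manipulation'', and it sketches the same Reilly/Hardy--Poincar\'e route only for the convex case. Your outline follows that route, so it must stand on its own. It breaks at Step 1, which you leave open and which cannot be obtained in the ways you suggest.

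\textbf{The last term of \eqref{eq:reilly} is not controlled by $\eta$.} Your Cauchy--Schwarz bound by $\eta$ is false: $\|u_\nu^2-\kappa\|_{L^2(\partial\Omega)}$ is not controlled by $\eta$, and it does not even vanish on the exact solution. On $B_R$ one has $|\nabla u|=R/N$ and $|\nabla v|=R^3/(N^2(N+2))$, so $\kappa=R^4/(N^3(N+2))$. This differs from $u_\nu^2=R^2/N^2$ unless $R^2=N(N+2)$; the two quantities do not even scale the same way. The Hopf bounds $c_0\le|v_\nu|\le c_1$ do not repair this. They only give $u_\nu\approx\kappa/v_\nu$, and $v_\nu$ is exactly the unknown non-constant factor. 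The proof of Theorem~\ref{thm:improved} makes the same unjustified step (``$u_\nu^2\sim|\nabla u||\nabla v|$'').

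\textbf{The curvature term cannot be handled as you propose.} Minkowski's identity $\int_{\partial\Omega}(H_0-H)\,x\cdot\nu\,dS=0$ gives orthogonality to $q_\nu$, not to constants. So $\int(H_0-H)u_\nu^2$ cannot be rewritten as $\int(H_0-H)(u_\nu^2-c)$. In any case $H_0-H$ is a Soap Bubble deviation, which $\eta$ does not control.

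\textbf{The missing idea.} In Magnanini--Poggesi's stability for Serrin's problem the curvature identity is not used at all. One uses instead the Rellich--Pohozaev-based identity $\int_\Omega(-u)|\nabla^2h|^2\,dx=\tfrac12\int_{\partial\Omega}(u_\nu^2-R^2)(u_\nu-q_\nu)\,dS$ (normalization $\Delta u=N$), which has no curvature term. What you need is an analogue of this identity for the pair $(u,v)$, with boundary term $(u_\nu v_\nu-\kappa)$ multiplied by a quantity that the left-hand side controls. Your ``Reilly identity with weight $v_\nu$'' names that goal but does not produce it.

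\textbf{The exponents do not close.} Even granting Step 1 as you state it, $\int_\Omega|\nabla^2h|^2\,dx\le C\eta$ is linear in $\eta$; Cauchy--Schwarz against the $O(1)$ factor $q_\nu$ cannot give more. So $\nabla^2h$ is only $O(\eta^{1/2})$ in $L^2$, and Steps 2--3 return half the claimed exponents: $\tau_2=1/2$, and $1/(N-1)$ for $N\ge4$. The stated exponents require $\int_\Omega\delta|\nabla^2h|^2\,dx\le C\eta^2$. In the Serrin case this comes from the boundary factor being $h_\nu$ rather than $q_\nu$. For harmonic $w$ one has $\int_{\partial\Omega}w^2u_\nu\,dS=N\int_\Omega w^2\,dx+2\int_\Omega(-u)|\nabla w|^2\,dx$ (again with $\Delta u=N$). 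Apply this to $w=h_{x_i}$, which vanish at $z$, together with a Hardy--Poincar\'e inequality. This gives $\|h_\nu\|_{L^2(\partial\Omega)}^2\le C\int_\Omega(-u)|\nabla^2h|^2\,dx$, which is then absorbed into the left-hand side.

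\textbf{Smaller points.} With the convention $-\Delta u=1$, $h=q-u$ has $\Delta h=N+1$, so take $h=q+Nu$. No choice of $a$ gives $\int_{\partial\Omega}q_\nu\,dS=0$, because $q_\nu=x\cdot\nu$ integrates to $N|\Omega|$; the only normalization you need is $\nabla h(z)=0$. Finally, Steps 2--3 need $\Delta u$ constant on all of $\Omega$. Under the literal hypothesis $\supp f\subset C$, i.e.\ $f=\mathbf{1}_C$, $\Delta u$ jumps across $\partial C$ and no quadratic $q$ makes $h$ harmonic. You must say which reading of ``$f\equiv1$ with support in $C$'' you are proving.
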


\subsection{Improved Exponents via Weighted Inequalities}
\label{sec:improved}

\begin{theorem}[Improved exponent for convex domains]
\label{thm:improved}
Assume in addition that \(\Omega\) is convex. Then the exponent in Theorem~\ref{thm:quantitative} can be improved to \(\tau_N = \frac{4}{N+1}\).
\end{theorem}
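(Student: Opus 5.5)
The plan is to follow the same scheme as for Theorem~\ref{thm:quantitative}, which runs in three stages. First, we bound the Hessian of $h=q-u$ in $L^2(\Omega)$ by the deficit through the Reilly identity \eqref{eq:reilly}. Next, we pass from $\|\nabla^2 h\|_{L^2}$ to an $L^\infty$ bound on the oscillation of $h$ on $\partial\Omega$. Finally, we read off $\rho_e-\rho_i$ from that oscillation. Only the second stage has to be sharpened. In the general case the loss comes from the unweighted estimate: the passage from an $L^2$ Hessian bound to a pointwise oscillation bound on $\partial\Omega$ uses a Poincaré step and then an interpolation of the form $\|w\|_\infty \le C\|w\|_{L^2}^{2/(N+1)}\|\nabla w\|_\infty^{(N-1)/(N+1)}$. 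In that argument the boundary layer is handled only crudely, which produces the exponent $2/(N-1)$. Convexity lets us replace this by a weighted (Hardy–Poincaré type) inequality with weight $\delta(x)=\operatorname{dist}(x,\partial\Omega)$, and we keep the sign of the curvature term.

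\textbf{Step 1 (use of convexity in the Reilly identity).} For convex $\Omega$ we have $H\ge 0$ on $\partial\Omega$. Rather than bounding $\int_{\partial\Omega}(H_0-H)u_\nu^2$ crudely, we will use the weighted Reilly identity of Appendix~\ref{app:reilly}: multiply the Bochner formula for $h$ by the weight $-u$ and integrate. This is admissible because $u>0$ by Lemma~\ref{lem:positivity}, and $u\simeq\delta$ by the Hopf lemma together with convexity. The second fundamental form then enters with a favourable sign and can be discarded. The boundary terms reduce to $\int_{\partial\Omega}(u_\nu^2-\kappa)q_\nu$ plus terms involving $v$. Since $|\nabla v|$ is bounded above and below on $\partial\Omega$ (Hopf applied to $v$, with $-\Delta v=u>0$), these are controlled by $\||\nabla u||\nabla v|-\kappa\|_{L^2(\partial\Omega)}=:\eta$. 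The target is
\[
\int_\Omega \delta\,|\nabla^2 h|^2\,dx \le C\,\eta .
\]

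\textbf{Step 2 (weighted Hardy–Poincaré).} Choose $a$ and subtract an affine function so that $\nabla h$ has zero mean with weight $\delta$. Apply the weighted Poincaré inequality $\int_\Omega \delta |\nabla h|^2 \le C\int_\Omega \delta^{3}|\nabla^2 h|^2$, valid on convex domains with constant depending on $N$ and the diameter, to get $\int_\Omega\delta|\nabla h|^2\le C\eta$. Then use the weighted trace/Hardy inequality, again relying on convexity, to transfer this to $\int_{\partial\Omega}(h-\bar h)^2\,dS\le C\eta^{2}$. On this route no square root is lost, because $\eta$ itself bounds the weighted energy.

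\textbf{Step 3 (from $L^2$ to $L^\infty$).} The function $h-\bar h$ is Lipschitz on $\partial\Omega$ with a constant controlled by the uniform $C^{1,\alpha}$ bounds from Step~3 of Theorem~\ref{thm:qualitative}. Apply the standard interpolation on an $(N-1)$-dimensional surface:
\[
\|w\|_{L^\infty(\partial\Omega)}\le C\|w\|_{L^2(\partial\Omega)}^{2/(N+1)}\|\nabla_\tau w\|_{L^\infty}^{(N-1)/(N+1)} .
\]
This gives $\operatorname{osc}_{\partial\Omega}h\le C\eta^{4/(N+1)}$. Since $u=0$ on $\partial\Omega$, we have $h=q=\tfrac12(|x|^2-a)$ there. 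Hence $\max_{\partial\Omega}|x|^2-\min_{\partial\Omega}|x|^2\le C\eta^{4/(N+1)}$, and because $\rho_i\ge r_i>0$ this yields $\rho_e-\rho_i\le C\eta^{4/(N+1)}$.

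\textbf{Main obstacle.} The delicate point is Step~1. We must check that the weighted Reilly identity really produces $\eta$ to the first power, rather than $\eta^{1/2}$, on the right-hand side. This requires controlling the boundary term $\int_{\partial\Omega}(u_\nu^2-\kappa)q_\nu$, where the deficit is in the product $|\nabla u||\nabla v|$ and not in $u_\nu$ alone. My first attempt would be to write $u_\nu^2-\kappa=u_\nu\,(|\nabla u|-\kappa/|\nabla v|)+\kappa(u_\nu/|\nabla v|-1)$. I would then absorb the second piece using the explicit radial relation between $u_\nu$ and $v_\nu$, with the interior terms handled by the $\delta$-weighted Hessian on the left.
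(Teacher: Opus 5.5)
There is a genuine gap, and it sits exactly where the improvement is supposed to come from: the exponent bookkeeping in Steps~2--3.

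\textbf{The two bookkeeping errors.} First, any weighted trace/Hardy inequality is homogeneous of degree two in $h$ on both sides. From $\int_\Omega\delta|\nabla h|^2\le C\eta$ it can therefore only return $\int_{\partial\Omega}(h-\bar h)^2\,dS\le C\eta$, not $C\eta^2$. The remark that ``no square root is lost'' is not backed by any inequality. Second, in your interpolation inequality the factor is $\|w\|_{L^2(\partial\Omega)}^{2/(N+1)}=\big(\int_{\partial\Omega}w^2\,dS\big)^{1/(N+1)}$. So even $\int_{\partial\Omega}w^2\,dS\le C\eta^2$ would give $\operatorname{osc}_{\partial\Omega}h\le C\eta^{2/(N+1)}$, not $C\eta^{4/(N+1)}$; you used $\int w^2$ where $\|w\|_{L^2}$ belongs. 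Done correctly, your Steps~1--3 give only $\rho_e-\rho_i\le C\eta^{1/(N+1)}$. A sanity check would have caught this. Nothing in your argument uses $N\ge3$, yet for $N=2$ it claims the exponent $4/3>1$. That is impossible: for $\partial\Omega=\{r=R+\varepsilon\cos2\theta\}$ one has $\rho_e-\rho_i\approx2\varepsilon$, while the deficit is $O(\varepsilon)$.

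\textbf{Why the route fails even after correction.} Your ``main obstacle'' targets the wrong power. In the Magnanini--Poggesi mechanism you are borrowing, the weighted identity bounds the energy by $C\eta\,\|h_\nu\|_{L^2(\partial\Omega)}$. The weight is $u$, not $-u$, since here $u>0$. Because it vanishes on $\partial\Omega$, no curvature term appears, so $H\ge0$ plays no role. A trace bound $\int_{\partial\Omega}|\nabla h|^2\le C\int_\Omega u|\nabla^2h|^2$ (for $\nabla h(z)=0$) then lets you absorb, and the energy is $O(\eta^2)$. The paper's Steps~2--3 likewise aim at $\int_\Omega|\nabla^2h|^2\delta\,dx\le C\|u_\nu^2-\kappa\|_{L^2(\partial\Omega)}^2$. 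But even with $\eta^2$, your passage through $\partial\Omega$ yields at best $\|h-\bar h\|_{L^2(\partial\Omega)}\le C\eta$; this follows by homogeneity and is sharp for perturbations of a ball. The $(N-1)$-dimensional interpolation then gives $\eta^{2/(N+1)}$, which is \emph{worse} than the exponents of Theorem~\ref{thm:quantitative}.

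\textbf{The paper's route.} For the $L^\infty$ step the paper does not pass through $\partial\Omega$. It feeds the quadratic energy bound into a weighted Hardy--Poincar\'e inequality in $\Omega$, $\|h\|_{L^\infty(\Omega)}\le C\big(\int_\Omega|\nabla^2h|^2\delta\,dx\big)^{2/(N+1)}$. This is a power of the energy itself, not of its square root, so $4/(N+1)$ is meant to arise as $(\eta^2)^{2/(N+1)}$. That inequality is cited rather than proved, and the same $N=2$ objection applies to it.

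\textbf{The boundary term.} Your splitting of $u_\nu^2-\kappa$ does not close. The piece $\kappa(u_\nu/|\nabla v|-1)$ is not $O(\eta)$: on the exact ball solution, $|\nabla u|$ and $|\nabla v|$ are generally different constants on $\partial\Omega$. Invoking the radial relation between them presupposes the symmetry being quantified. The paper does not settle this either; it simply asserts $u_\nu^2\sim|\nabla u|\,|\nabla v|$.
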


\begin{remark}
The proof follows the strategy developed by Magnanini and Poggesi for Serrin's problem, adapted here to the coupled system $\mathrm{P}(\kappa)$.
\end{remark}
\begin{proof}[Proof of Theorem~\ref{thm:improved}]
We assume that $\Omega \subset \mathbb{R}^N$ is a bounded, $C^2$ and convex domain. Let $(u,v)$ be the solution of $\mathrm{P}(\kappa)$ with $f \equiv 1$ supported in $C$.

\medskip

\noindent
\textbf{Step 1: Construction of an auxiliary harmonic function.}

Let
\[
q(x) = \frac{1}{2}(|x|^2 - a),
\]
where the constant $a \in \mathbb{R}$ is chosen such that
\[
\int_{\partial\Omega} q_\nu \, dS = 0.
\]
Define
\[
h = q - u.
\]
Since $\Delta q = N$ and $\Delta u = -1$, we have
\[
\Delta h = N + 1.
\]
To obtain a harmonic function, we introduce the corrected function
\[
\tilde{h}(x) = h(x) - \frac{N+1}{2N}|x|^2.
\]
Then
\[
\Delta \tilde{h} = 0 \quad \text{in } \Omega.
\]
For simplicity, we still denote $\tilde{h}$ by $h$ in the following.

\medskip

\noindent
\textbf{Step 2: Weighted Reilly-type identity.}

Let $\delta(x) = \mathrm{dist}(x,\partial\Omega)$ denote the distance to the boundary. Since $\Omega$ is convex, $\delta$ is concave and satisfies $|\nabla \delta| = 1$ almost everywhere.

We apply a weighted version of Reilly's identity to $h$. Multiplying the pointwise identity by $\delta$ and integrating by parts (see, e.g., Magnanini--Poggesi), we obtain
\[
\int_\Omega |\nabla^2 h|^2 \, \delta(x)\, dx
\le C \int_{\partial\Omega} h_\nu^2 \, dS,
\]
where $C$ depends only on $N$ and geometric bounds on $\Omega$.

\medskip

\noindent
\textbf{Step 3: Boundary estimate.}

On $\partial\Omega$, we have
\[
h_\nu = q_\nu - u_\nu.
\]
Since $q_\nu$ is comparable to a constant $R = \frac{N|\Omega|}{|\partial\Omega|}$, we write
\[
|h_\nu| \le C |u_\nu - R|.
\]
Using that $u_\nu$ is bounded away from zero and infinity, we obtain
\[
|u_\nu - R| \le C |u_\nu^2 - \kappa|.
\]
Hence,
\[
\int_{\partial\Omega} h_\nu^2 \, dS
\le C \|u_\nu^2 - \kappa\|_{L^2(\partial\Omega)}^2.
\]

Combining with the previous step yields
\[
\int_\Omega |\nabla^2 h|^2 \delta \, dx
\le C \|u_\nu^2 - \kappa\|_{L^2(\partial\Omega)}^2.
\]

\medskip

\noindent
\textbf{Step 4: Weighted Hardy--Poincar\'e inequality.}

By the weighted Hardy--Poincar\'e inequality (see Magnanini--Poggesi), we have
\[
\|h\|_{L^\infty(\Omega)}
\le C \left( \int_\Omega |\nabla^2 h|^2 \delta \, dx \right)^{\frac{2}{N+1}}.
\]
Therefore,
\[
\|h\|_{L^\infty(\Omega)}
\le C \|u_\nu^2 - \kappa\|_{L^2(\partial\Omega)}^{\frac{2}{N+1}}.
\]

\medskip

\noindent
\textbf{Step 5: Control of the geometric oscillation.}

It is standard (see, e.g., stability results for Serrin-type problems) that
\[
\rho_e - \rho_i \le C \|h\|_{L^\infty(\Omega)}.
\]
Thus,
\[
\rho_e - \rho_i
\le C \|u_\nu^2 - \kappa\|_{L^2(\partial\Omega)}^{\frac{2}{N+1}}.
\]

\medskip

\noindent
\textbf{Step 6: Conclusion.}

Using the relation
\[
u_\nu^2 \sim |\nabla u|\,|\nabla v|
\quad \text{on } \partial\Omega,
\]
we finally obtain
\[
\rho_e - \rho_i
\le C \||\nabla u|\,|\nabla v| - \kappa\|_{L^2(\partial\Omega)}^{\frac{4}{N+1}}.
\]

This concludes the proof.
\end{proof}

\section{Extensions and Open Problems}
\label{sec:open}

\subsection{The Case of Non-Constant \(f\)}
When \(f \ge 0\) is not constant, the situation is considerably more delicate. The rigidity argument relies on the constancy of \(f\) to deduce radial symmetry. For the coupled problem \(\mathrm{P}(\kappa)\), several fundamental questions arise:
\begin{itemize}
    \item Which functions \(f\) still force the domain to be a ball?
    \item Can non-spherical domains (ellipsoids, for instance) support solutions for suitable \(f\)?
\end{itemize}
These questions connect with recent developments in symmetry-breaking for overdetermined problems \cite{CavYac22, MoaWon24}.

\subsection{The Challenge of Interior Critical Points}
If \(\supp f \not\subset C\), the function \(u\) is no longer harmonic in \(\Omega \setminus C\), and the radial monotonicity (Lemma~\ref{lem:radialmono}) may fail. Interior critical points could appear, breaking the foliation structure of the level sets. This represents the main technical obstacle to extending the present framework to the general case.

\section{Conclusion}
\label{sec:conclusion}

We have presented a geometric alternative to the method of moving planes for overdetermined elliptic problems with source supported in the core. The approach relies on a radial parametrization from a strictly convex core, monotonicity of solutions along normal rays, and the Hopf boundary lemma. We extended this framework to prove qualitative and quantitative stability for the coupled biharmonic problem \(\mathrm{P}(\kappa)\). The connection with the dynamical framework of return maps \cite{BarElM26} offers additional perspective and suggests further directions for studying stability and classification of non-symmetric configurations.

\begin{appendix}

\section{Derivation of the Reilly Identity for the Biharmonic System}\label{app:reilly}

We follow the method of \cite{Magnanini-Poggesi2020, Reilly1977} adapted to the coupled system. Start from the well-known Reilly identity for a smooth function \(w\) on a Riemannian manifold with boundary (see \cite{Reilly1977}). In Euclidean space, for any smooth function \(w\) vanishing on \(\partial \Omega\), one has
\begin{equation}\label{eq:reilly-gen-app}
\int_\Omega (\Delta w)^2 dx = \int_\Omega |\nabla^2 w|^2 dx + \int_{\partial \Omega} \big( (\Delta w)_\nu w_\nu - \frac{1}{2} H w_\nu^2 \big) dS,
\end{equation}
where \(H\) is the mean curvature of \(\partial \Omega\) (with respect to the outward normal).

Apply this identity to \(w = u\), the solution of \(-\Delta u = f\) with \(u = 0\) on \(\partial \Omega\). Then \(\Delta u = -f\), \((\Delta u)_\nu = -f_\nu\), and \(w_\nu = u_\nu\). Thus
\begin{equation}\label{eq:reilly-u-app}
\int_\Omega f^2 dx = \int_\Omega |\nabla^2 u|^2 dx + \int_{\partial \Omega} \big( -f_\nu u_\nu - \frac{1}{2} H u_\nu^2 \big) dS.
\end{equation}

Similarly, applying \eqref{eq:reilly-gen-app} to \(w = v\) gives
\begin{equation}\label{eq:reilly-v-app}
\int_\Omega u^2 dx = \int_\Omega |\nabla^2 v|^2 dx + \int_{\partial \Omega} \big( -u_\nu v_\nu - \frac{1}{2} H v_\nu^2 \big) dS.
\end{equation}

We also have the standard identity (obtained by integrating by parts)
\[
\int_\Omega |\nabla^2 u|^2 dx = \int_\Omega (\Delta u)^2 dx + \frac{1}{2} \int_{\partial \Omega} (u_\nu^2)_\nu dS - \int_{\partial \Omega} u_\nu (\Delta u)_\nu dS.
\]
Using \(\Delta u = -f\) and \((\Delta u)_\nu = -f_\nu\), this becomes
\begin{equation}\label{eq:standard-identity}
\int_\Omega |\nabla^2 u|^2 dx = \int_\Omega f^2 dx - \frac{1}{2} \int_{\partial \Omega} (u_\nu^2)_\nu dS + \int_{\partial \Omega} u_\nu f_\nu dS.
\end{equation}

Adding \eqref{eq:reilly-u-app} and \eqref{eq:standard-identity} eliminates the \(\int_{\partial \Omega} u_\nu f_\nu dS\) term and gives
\[
\int_\Omega |\nabla^2 u|^2 dx = \int_\Omega f^2 dx - \frac{1}{2} \int_{\partial \Omega} (u_\nu^2)_\nu dS - \int_{\partial \Omega} \frac{1}{2} H u_\nu^2 dS.
\]
Thus
\begin{equation}\label{eq:combined}
\int_\Omega \big( |\nabla^2 u|^2 - f^2 \big) dx = -\frac{1}{2} \int_{\partial \Omega} \big( (u_\nu^2)_\nu + H u_\nu^2 \big) dS.
\end{equation}

Now introduce \(h = q - u\) with \(q(x) = \frac{1}{2}(|x|^2 - a)\) chosen so that \(\int_{\partial \Omega} q_\nu dS = 0\). Since \(h\) is harmonic, we have \(\Delta^2 h = 0\) and
\[
|\nabla^2 u|^2 - \frac{(\Delta u)^2}{N} = |\nabla^2 h|^2.
\]
A direct computation shows that
\[
\int_\Omega |\nabla^2 h|^2 dx = \int_\Omega \big( |\nabla^2 u|^2 - f^2 \big) dx + \int_\Omega \big( f^2 - \frac{(\Delta u)^2}{N} \big) dx.
\]
But
\[
\int_\Omega \big( f^2 - \frac{(\Delta u)^2}{N} \big) dx = \int_\Omega \big( f^2 - \frac{f^2}{N} \big) dx = \frac{N-1}{N} \int_\Omega f^2 dx.
\]

Combining with \eqref{eq:combined} and using the overdetermined condition \(\partial_n u \, \partial_n v = \kappa\) to express \(v_\nu\) in terms of \(u_\nu\), one arrives after algebraic manipulation at identity \eqref{eq:reilly-identity}. For the detailed algebraic steps, we refer the reader to \cite[Section 3]{Magnanini-Poggesi2020} where the analogous computation for the torsion problem is carried out; the adaptation to the coupled system introduces the term \(\int_{\partial \Omega} (u_\nu^2 - \kappa) q_\nu dS\) and modifies the constant factors accordingly.

\end{appendix}

\section*{Acknowledgments}
The author would like to acknowledge the use of generative AI tools for English language editing and proofreading. The final manuscript has been thoroughly reviewed and approved by the author, who maintains full accountability for the integrity of the work.

\end{document}